
\documentclass{amsart}

\usepackage{tikz}
\usetikzlibrary{decorations.markings}
\usetikzlibrary{shapes}
\usetikzlibrary{backgrounds}

\usepackage{amssymb}
\usepackage[all]{xy}
\usepackage{multirow}

\usepackage{hyperref}
\makeatletter
\let\@@enum@org\@@enum@
\def\@@enum@[#1]{\@@enum@org[\normalfont #1]}
\makeatother

\newtheorem{thm}{Theorem}
\newtheorem{lem}[thm]{Lemma}
\newtheorem{cor}[thm]{Corollary}

\theoremstyle{remark}

\newtheorem*{rem}{Remark}

\theoremstyle{definition}

\begin{document}
\title{The seven-strand braid group is CAT(0)}
\author{Seong Gu Jeong}
\address{Department of Mathematics, Korea Advanced Institute of Science and Technology, Daejeon, 307-701, Korea}
\email{wjdtjdrn@kaist.ac.kr}
\begin{abstract}
We prove that the 7-strand braid group is CAT(0) by elaborating on the argument of Haettel, Kielak and Schwer.
\end{abstract}
\subjclass[2010]{Primary 20F36, 20F65, 57M15}
\keywords{non-crossing partition, CAT(1), turning face}

\maketitle

\section{Introduction}

In \cite{Char07} Charney asked whether braid group is CAT(0). In \cite{TJ10}, Brady and McCammond proved that braid groups on at most 5 strands are CAT(0). They also classified finite type Artin groups on four Artin generators which is CAT(0) but their proof uses a computer program. In \cite{TDP13}, Haettel, Kielak and Schwer showed that braid group on at most 6 strands are CAT(0) without using computer programs. In fact, both of them proved that the diagonal link of the \textit{Non crossing partition complex} is CAT(1). As proved in \cite{TJ10}[Proposition 8.3], it impiles that braid group is CAT(0). Haettel, Kielak and Schwer used a criterion of Gromov \cite{Grom87}, Bowditch \cite{Bowd95} and Charney-Davis \cite{RM93} to show if non crossing partition complex has no face satisfying certain four conditions, then the diagonal link of the non crossing partition complex is CAT(1). In this thesis, we prove that $B_7$ is CAT(0) group, by proving that for $n=7$, no faces of non crossing partition complex satisfy Haettel, Kielak and Schwer's four conditions with slightly strengthened fourth condition. Our main theorem is 
\begin{thm}\label{thm5} There does not exist a chain $F$ in $\mathcal{NCP}_7$ such that both $F$ and its dual $F^*$ satisfy condition I,II,III and IV. (These conditions are explained in Section 2.7)
\end{thm}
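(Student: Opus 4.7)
The plan is to proceed by a systematic combinatorial case analysis on chains in the non-crossing partition lattice $\mathcal{NCP}_7$, exploiting the symmetry provided by the duality $F \mapsto F^*$. Since conditions I--III, by analogy with the $n \leq 6$ analysis of Haettel, Kielak and Schwer, restrict the allowable ranks, block sizes, and relative positions of the partitions comprising $F$, the first task is to enumerate all combinatorial ``shapes'' of chains in $\mathcal{NCP}_7$ compatible with I--III, then cut the list further using the dual conditions on $F^*$.

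First, I would reduce to a finite list by applying conditions I--III to $F$ alone: typically these force the partitions in the chain to avoid singleton blocks and to satisfy prescribed rank/length constraints, which for $n=7$ leaves only a manageable number of isomorphism types up to cyclic and reflective symmetry. I would then apply the same conditions to the Kreweras-type dual chain $F^*$; because complementation exchanges block-size profiles in a controlled way, many candidates for $F$ will have a dual $F^*$ that immediately violates one of I--III, eliminating them at low cost. This step is essentially a bookkeeping exercise, but it is what makes the remaining case analysis tractable.

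For each surviving candidate I would invoke the strengthened condition IV described in Section 2.7. This is precisely the step that distinguishes $n=7$ from the smaller cases already handled in \cite{TDP13}, since for $n \leq 6$ the unstrengthened conditions suffice. For each surviving chain I would aim to either (a) exhibit an explicit obstruction to a turning face placed consistently with the non-crossing structure, or (b) derive a contradiction from IV applied simultaneously to $F$ and to $F^*$. The duality is crucial here: in several cases a single geometric inspection, read off on both $F$ and $F^*$, produces a crossing between blocks that cannot coexist in a non-crossing partition of $[7]$.

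The main obstacle, I expect, will be the borderline chains whose block-size profiles are nearly self-dual under Kreweras complementation, since these slip past conditions I--III both on $F$ and on $F^*$. These are the configurations that motivated strengthening condition IV in the first place, and dispatching each one will likely require a hands-on analysis of the ways the blocks can be cyclically arranged on the boundary of the disk. Keeping the enumeration complete but non-redundant, and ensuring that the strengthened form of IV is actually needed (rather than appealing to an unverified stronger hypothesis), will be the delicate part of the argument.
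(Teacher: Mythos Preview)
Your high-level strategy---enumerate chains satisfying I--III up to the symmetries of $U_7$ and duality, then eliminate each by showing it fails condition IV---is indeed what the paper does. However, your proposal has a genuine gap at the step that actually carries all the weight.

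Condition IV asserts the existence of a maximal chain $C$ in $\mathcal{NCP}_7$ with $F_i'\cap C_i'=\emptyset$ for every $i$; to show a given $F$ fails IV you must rule out \emph{all} such $C$. Your plan offers only ``exhibit an explicit obstruction to a turning face'' or ``derive a contradiction from IV applied simultaneously to $F$ and $F^*$,'' neither of which is a concrete mechanism. The paper's proof rests on a specific device you have not identified: a family of easily checkable containment patterns among the sets $F_i,F_{i+1},\ldots$ (labelled $(k,\pm,i)$), together with a lemma showing that if $F$ satisfies $(k,+,i)$ then any admissible $C$ must have $i+1,\ldots,i+k\notin C_i$, and symmetrically for $(k,-,i)$. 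A corollary then says that whenever $F$ satisfies $(k,+,i)$ and $(l,-,i)$ with $k+l\geq 6$, condition IV fails outright, because $C_i$ would be forced down to $\{i\}$. Almost every case in the enumeration is dispatched by reading off such a pair; a handful require pinning $C_i$ to a two-element set and deriving a crossing or an equality contradiction among the $C_j$'s. Without something playing this role, your plan leaves the hard step unexecuted.

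Two smaller points. First, the paper uses duality only to halve the enumeration (list one of $F,F^*$), not as an extra filter via I--III on $F^*$; once the list is fixed, each candidate is killed by showing \emph{it} fails IV, with no further appeal to the dual. Second, your phrasing (``obstruction to a turning face,'' ``crossing between blocks that cannot coexist'') suggests you are conflating the geometric origin of condition IV with its content: IV is a purely combinatorial statement about the sets $F_i'$ and an auxiliary maximal chain $C$, and the proof of Theorem~\ref{thm5} never touches turning faces or the building.
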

 We will explain the relationship between condition I,II,III and IV and Haettel, Kielak and Schwer's four conditions in section 4. Condition IV is hard to check while conditions I,II,III are easy to check. We will enumerate faces satisfying condition I,II,III and then show no such faces satisfy condition IV using our criterion. In section 3, we will explain the criterion to check condition IV.\\


\section{Preliminaries}

\subsection{Bounded, graded poset} A poset (partially ordered set) is \textit{bounded} if it has maximum and minimum. A bounded poset has \textit{rank} $n$ if every chain is contained in a maximal chain with $n+1$ elements, and a bounded poset is \textit{graded} if its intervals has rank. For an element $x$ of a bounded graded poset, the \textit{rank} of $x$ is the rank of the interval from minimum to $x$.

\subsection{Orthoscheme metric} For a graded poset, its We will introduce a metric on the geometric realization $|P|$ of a bounded graded poset $P$.\\
We first define the $n$-\textit{orthoscheme} $O_n\subset\mathbb{R}^n$ be the $n$-simplex in $\mathbb{R}^n$ with vertex set $\{(0,0,0,\cdots,0),(1,0,0,\cdots,0),(1,1,0,\cdots,0),\cdots,(1,1,\cdots,1)\}$. We will define a metric called \textit{orthoscheme metric} on $|P|$ by letting any $n$-simplex $\{g_0,\cdots,g_n\}$ (indices correspond to their ranks) is isometric to $O_n$ via the isometry sending $g_0$ to $(0,\cdots,0)$, $g_1$ to $(1,0,\cdots,0)$,$\cdots$,$g_n$ to $(1,\cdots,1)$ and endow the induced length metric. From now on, we will endow orthoscheme metric for every geometric realization of a bounded graded poset.

\subsection{Non crossing partition complex}[]
Consider the set of all partitions of the set $\{1,\cdots,n\}$ that is partially ordered by declaring $P\le Q$ for partitions $P,Q$ if for any $\sigma\in P$, $\sigma\subseteq\tau$ for some $\tau\in Q$. Then $\{\{1,2,\cdots,n\}\}$ is the unique maximal element and $\{\{1\},\cdots,\{n\}\}$ is the unique minimal element.
Let $\mathcal P_n$ be the set of all partitions of $\{1,\cdots,n\}$ that are not neither $\{\{1,2,\cdots,n\}\}$ nor $\{\{1\},\cdots,\{n\}\}$.

Sometimes it is convenient to identify $\{1,2,\cdots,n\}$ with $U_n:=\{e^{2ki\pi/n}|1\leq k\leq n\}\subset\mathbb{C}$. For $P\in\mathcal P_n$, $\sigma\in P$ is called a \emph{block} if $\sigma$ has more than one element. We note that a partition is uniquely determined by its blocks via adding singletons. A block $\sigma$ is \emph{consecutive} if $\sigma$ consists of consecutive integers modulo $n$, that is, they are consecutive in $U_n$. From now on, we assume that all computations on $\{1,2,\cdots,n\}$ are done modulo $n$.

Blocks $\sigma$ and $\tau$ are \emph{crossing} if $\sigma\cap\tau=\emptyset$ and there are $\sigma_1,\sigma_2\in\sigma$ and $\tau_1,\tau_2\in\tau$ such that either $\sigma_1<\tau_1<\sigma_2<\tau_2$ or $\tau_1<\sigma_1<\tau_2<\sigma_2$. For $P\in \mathcal P_n$, $P$ is \emph{non-crossing}, if no two blocks of $P$ are crossing.  Let $\mathcal{NCP}_n$ be the subposet of $(\mathcal P_n,\le)$ consisting of all non-crossing partitions. For $P, Q\in \mathcal{NCP}_n$, $P$ and $Q$ are \emph{crossing}, written $P\nparallel Q$, if a block of $P$ and a block of $Q$ are crossing. Otherwise $P$ and $Q$ are \emph{non-crossing}, written $P\parallel Q$.

For later use, we will denote $NCP_n$ to be the poset of all non crossing partitions (with maximum and minimum), then it is bounded and graded. We will call $|NCP_n|$ \textit{non crossing partition complex}.

\subsection{Brady complex}
We will define a complex that is relevant to braid group and is isometric to non crossing partition complex.\\
Let $B_n$ be the braid group on $n$-strands. We will use the band generator presentation for $B_n$. We will define the \textit{Brady complex} $K_n$ of $B_n$. $K_n$ is the geometric realization of the bounded graded poset with vertex set $\{g\in B_n|id\leq g\leq \delta\}$ with the prefix order on positive braids. $B_n$ acts on $K_n$ simplicially by $g\cdot\{g_0,\cdots,g_k\}=\{gg_0,\cdots,gg_k\}$. It is proved in \cite{Brad01} that the quotient complex $K_n/B_n$ is compact and a $K(B_n,1)$ space. Therefore, to prove $B_n$ is CAT(0), it suffices to prove that $K_n/B_n$ is locally CAT(0), because $B_n$ acts properly discontinuously cocompactly on the universal cover of $K_n/B_n$. It is proved in \cite{TJ10}[Definition 8.2] that $K_n/B_n$ is as a metric space, splits as a product of a PE-complex $Y$ and the circle of lenth $\sqrt{n}$ and that, the link of the unique vertex in $Y$ is isometric to the diagonal link $\text{lk}(e_{01},K_n)$ of $K_n$, where $e_{01}$ is the edge in $K_n$ connecting $id$ and $\delta$. Hence, if $\text{lk}(e_{01},K_n)$ is CAT(1), then $Y$ is locally CAT(0) and so $K_n/B_n$ is locally CAT(0) since it is a product of two locally CAT(0) spaces.\\

\subsection{Rank and corank of a chain} For a chain $F=\{P_1,\cdots,P_k\}$ of $\mathcal{NCP}_n$, its \emph{rank} is defined by $\text{rk} F=\{\text{rk} P_1,\cdots,\text{rk} P_k\}$ and its \emph{corank} is defined by $\text{cork} F=\{1,\cdots,n-2\}\setminus\text{rk} F$.

\subsection{Duality}

In this subsection, we will explain the duality of $\mathcal{NCP}_n$, which will be used to reduce the number of cases that have to be checked in the main theorem.\\
For $P\in\mathcal{NCP}_n$, its \emph{dual} $P^*$ is the partition containing all blocks $\sigma=\{i_1,\cdots,i_k\}$ written clockwise on $U_n$ such that there exist distinct members $\sigma_1,\cdots,\sigma_k\in P$ such that $i_1\in\sigma_1,\cdots,i_k\in\sigma_k$ and $i_1+1\in\sigma_2,i_2+1\in\sigma_3,\cdots,i_{k-1}+1\in\sigma_k,i_k+1\in\sigma_1$. It is easy to check that $P^*\in\mathcal{NCP}_n$ and $P\le Q$ iff $Q^*\le P^*$. Note that $\text{rk}P^*=n-1-\text{rk}P$. 

\subsection{The four conditions} We will introduce four conditions (which are variation of the conditions used by Haettel, Kielak and Schwer) on a chain of $\mathcal{NCP}_n$. It turns out that if no chains of $\mathcal{NCP}_n$ satisfied the four conditions, then $B_n$ is CAT(0) group.\\
Let $F$ be a chain in the poset $\mathcal{NCP}_n$. For $1\le i \le n$, we define $F_i$ to be the smallest subset of $\{1,2,\ldots,n\}$ that appears as a block of a partition in $F$ containing $i$, or $F_i=\{1,2,\cdots,n\}$ if the singleton $\{i\}$ is a member of all partitions in $F$. We remark that for a chain $F$ in $\mathcal{NCP}_n$ and $1\leq i,j\leq n$, $F_j\subseteq F_i$ iff $j\in F_i$. Indeed, if $F_j\subseteq F_i$, then $j\in F_j\subseteq F_i$ by definition. If $j\in F_i$ then $F_i$ contains both $i$ and $j$ and $F_j$ is the smallest block of a partition in $F$ containing $j$ and so $F_j\subseteq F_i$. Set $F_i'=F_i\cap\{i-1,i+1\}$.

For a chain $F$ in $\mathcal{NCP}_n$, we list four conditions that $F$ may satisfy:
\begin{enumerate}
\item[I.] $\text{cork} F$ contains consecutive integers.
\item[II.] There exists an element $P\in F$ such that $P$ is not of the form: $P$ have exactly one block that is consecutive.
\item[III.] Let $F=\{P_1,\cdots,P_k\}$ so that $i<j$ if $P_i<P_j$. There are $P^+,P^-\in \mathcal{NCP}_n$ satisfying:
    \begin{enumerate}
    \item[(i)] either $P_i<P^+,P^-<P_{i+1}$ for some $1\leq i\leq k-1$, or $P^+,P^-<P_1$, or $P_k<P^+,P^-$;
    \item[(ii)] $P^+\nparallel P^-$.
    \end{enumerate}
\item[IV.] There is a maximal chain $C$ in $\mathcal{NCP}_n$ such that $F_i'\cap C_i'=\emptyset$ for all $1\leq i \leq n$.
\end{enumerate}

\section{Proof of main theorem}

We will prove the main theorem: There does not exist a chain $F$ in $\mathcal{NCP}_7$ such that both $F$ and its dual $F^*$ satisfy condition I,II,III and IV.\\

Note that, for given chain $F$, we can check condition I and II easily. Condition III can be checked in the following way. We write $F=\{P_1,\cdots,P_k\}$, where $i<j$ iff $\text{rk}P_i<\text{rk}P_j$ and check whether
\begin{enumerate}
\item[i)] $P_k$ has at least 4 distinct members $\sigma_1,\sigma_2,\sigma_3,\sigma_4$ such that $\sigma_1\cup\sigma_3$ and $\sigma_2\cup\sigma_4$ are crossing or
\item[ii)] For some $1\leq i \leq k-1$, $P_i$ has at least 4 distinct members $\sigma_1,\sigma_2,\sigma_3,\sigma_4$ as a subset of a single block $\sigma\in P_{i+1}$ such that $\sigma_1\cup\sigma_3$ and $\sigma_2\cup\sigma_4$ are crossing or
\item[iii)] $P_1$ has a block consists of at least 4 members.
\end{enumerate}
Hence we can enumerate chains satisfying condition I,II and III up to symmetries of $U_7$ and duality (Note that condition I,II,III and IV are invariant under symmetries of $U_7$). Then we will show none of them satisfies condition IV.
But condition IV is not easy to check. Our strategy is to check condition IV in the following way: Given a chain $F$, we can calculate $F_i'$ for $1\leq i\leq n$. These $F_i'$ are obstructions to construct a maximal chain $C$ satisfying $F_i'\cap C_i'=\emptyset$ for all $1\leq i \leq n$. We will give some conditions of configurations of $\{F_1,\cdots,F_n\}$ that make $C_i$ cannot contain some elements (Lemma \ref{lem3}). Let $F$ be a chain in $\mathcal{NCP}_n$ and $1\le i\le n$.\\

We say that \textit{$F$ satisfies $(1,+,i)$} if
\begin{enumerate}
\item[] $F_i\supseteq F_{i+1}$.
\end{enumerate}

We say that \textit{$F$ satisfies $(2,+,i)$} if either
\begin{enumerate}
\item $F_i=F_{i+1}$ or
\item $F_i\supseteq F_{i+1}\supseteq F_{i+2}$
\end{enumerate}

We say that \textit{$F$ satisfies $(3,+,i)$} if either
\begin{enumerate}
\item $F_i=F_{i+1}\supseteq F_{i+2}$ or
\item $F_i=F_{i+1}\subseteq F_{i+2}$ or
\item $F_i\supseteq F_{i+1}=F_{i+2}$ or
\item $F_i\supseteq F_{i+1}\supseteq F_{i+2}\supseteq F_{i+3}$.
\end{enumerate}

we say that \textit{$F$ satisfies $(4,+,i)$} if either
\begin{enumerate}
\item $F_i=F_{i+1}=F_{i+2}$ or
\item $F_i=F_{i+1}\supseteq F_{i+2}\supseteq F_{i+3}$ or
\item $F_i=F_{i+1}\subseteq F_{i+2}\supseteq F_{i+3}$ or
\item $F_i=F_{i+1}\subseteq F_{i+2}\subseteq F_{i+3}$ or
\item $F_i\supseteq F_{i+1}=F_{i+2}\supseteq F_{i+3}$ or
\item $F_i\supseteq F_{i+1}=F_{i+2}\subseteq F_{i+3}$ or
\item $F_i\supseteq F_{i+1}\supseteq F_{i+2}=F_{i+3}$ or
\item $F_i\supseteq F_{i+1}\supseteq F_{i+2}\supseteq F_{i+3}\supseteq F_{i+4}$.
\end{enumerate}

We will also say \textit{$F$ satisfies $(k,+,i)(m)$} if it satisfies condition (m) among conditions for $(k,+,i)$.
By replacing $+$ sign by $-$ sign in above conditions, we can define \textit{$F$ satisfies $(k,-,i)$} and \textit{$F$ satisfies $(k,-,i)(m)$}. For example,
we say that $(F,i)$ satisfies $(2,-,i)$ if either
\begin{enumerate}
\item $F_i=F_{i-1}$ or
\item $F_i\supseteq F_{i-1}\supseteq F_{i-2}$.
\end{enumerate}
We say that $F$ satisfies $(4,-,i)(6)$ if
\begin{enumerate}
\item[] $F_i\supseteq F_{i-1}=F_{i-2}\subseteq F_{i-3}$.
\end{enumerate}

The following lemma and corollary are useful to prove Lemma \ref{lem3}.

\begin{lem}\label{lem1} Let $F$ be a chain in $\mathcal{NCP}_n$ and $1\leq i,j\leq n$. If $F_i\nsubseteq F_j$ and $F_j\nsubseteq F_i$, then $F_i\cap F_j=\emptyset$ and $F_i\parallel F_j$.
\end{lem}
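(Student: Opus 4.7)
The plan is to leverage the remark immediately preceding the list of conditions, which states that $F_j\subseteq F_i$ if and only if $j\in F_i$. Applying this equivalence in both directions, the hypothesis $F_i\nsubseteq F_j$ and $F_j\nsubseteq F_i$ becomes $j\notin F_i$ and $i\notin F_j$. As a side remark, this already rules out the case $F_i=\{1,\ldots,n\}$ or $F_j=\{1,\ldots,n\}$, since either equality would force the reverse inclusion. So both $F_i$ and $F_j$ are realized as honest blocks: $F_i$ is a block of some partition $P\in F$ (in fact, the smallest one in which $i$ is not a singleton), and $F_j$ is a block of some $Q\in F$.

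Now I use that $F$ is a chain to compare $P$ and $Q$. Without loss of generality $P\le Q$, so every block of $P$ is contained in some block of $Q$. In particular $F_i\subseteq\sigma$ for a unique block $\sigma$ of $Q$. The whole argument then pivots on comparing $\sigma$ with $F_j$, which is another block of $Q$.

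For the first conclusion $F_i\cap F_j=\emptyset$: if some $k$ lay in $F_i\cap F_j$, then $k\in F_i\subseteq\sigma$ would force $\sigma$ to be \emph{the} block of $Q$ containing $k$; but $F_j$ is already a block of $Q$ containing $k$, so $\sigma=F_j$, giving $F_i\subseteq F_j$, contrary to hypothesis. For the second conclusion $F_i\parallel F_j$: the same reasoning shows $\sigma\ne F_j$, so $\sigma$ and $F_j$ are two distinct blocks of the non-crossing partition $Q$ and hence themselves non-crossing; since $F_i\subseteq\sigma$, any would-be crossing witness between $F_i$ and $F_j$ would also witness a crossing between $\sigma$ and $F_j$, contradiction.

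There is essentially no obstacle here; the only thing requiring a bit of care is the bookkeeping of the ``$F_i$ equals the whole set'' edge case, which is handled up front by the equivalence from the remark. The entire proof is a short chase through the definition once one observes that $P\le Q$ in the chain gives a block $\sigma$ of $Q$ that \emph{compares} $F_i$ and $F_j$ inside a single non-crossing partition.
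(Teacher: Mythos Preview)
Your proof is correct and follows essentially the same approach as the paper: rule out the whole-set case, realize $F_i$ and $F_j$ as blocks of partitions $P\le Q$ in the chain, pass $F_i$ into a block $\sigma$ of $Q$, and then compare $\sigma$ with $F_j$ inside the single non-crossing partition $Q$. The only cosmetic difference is that you invoke the equivalence $F_j\subseteq F_i\Leftrightarrow j\in F_i$ to dispose of the edge case, whereas the paper observes directly that neither set can equal $\{1,\ldots,n\}$.
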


\begin{proof} Suppose that $F_i\nsubseteq F_j$ and $F_j\nsubseteq F_i$. Then none of them are equal to $\{1,2,\cdots,n\}$, so $F_i$ is a block of $P_1\in F$ and $F_j$ is a block of $P_2\in F$. Without loss of generality we may assume $P_1\leq P_2$, since $F$ is a chain. Hence, by definition, $F_i\subseteq^\exists\sigma\in P_2$. Since we assumed $F_i\nsubseteq F_j$, we have $\sigma\neq F_j$ (hence $\sigma\cap F_j=\emptyset$). So $F_i\cap F_j\subseteq\sigma\cap F_j=\emptyset$. To prove $F_i\parallel F_j$, suppose that $F_i\nparallel F_j$, then $\sigma\nparallel F_j$, a contradiction occurs.
\end{proof}

\begin{cor}\label{cor2} Let $F$ be a chain in $\mathcal{NCP}_n$, $1\leq i\leq n$, $2\leq k\leq n-1$ and $i+1\leq j\leq i+k-1$. If $i,i+k\in F_i$ and $j\notin F_i$, then either $F_i\subseteq F_j$ or $F_j\subseteq\{i+1,\cdots,i+k-1\}$.
\end{cor}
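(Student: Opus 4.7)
My plan is to deduce the corollary from Lemma \ref{lem1} by a containment case split. Since $j\in F_j$ while $j\notin F_i$ by hypothesis, the containment $F_j\subseteq F_i$ is immediately ruled out. Thus if the first alternative $F_i\subseteq F_j$ fails, neither block contains the other, and Lemma \ref{lem1} yields $F_i\cap F_j=\emptyset$ together with $F_i\parallel F_j$. One small bookkeeping point should be dispatched up front: $F_i=\{1,\ldots,n\}$ is impossible because $j\notin F_i$, and $F_j=\{1,\ldots,n\}$ already gives $F_i\subseteq F_j$, so in the remaining case both $F_i$ and $F_j$ are honest blocks of partitions in $F$ and Lemma \ref{lem1} applies as stated.

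Next I would exploit the cyclic structure to force the second alternative. The elements $i$ and $i+k$ of $F_i$ cut $U_n$ into two arcs, the ``short'' arc $A=\{i+1,\ldots,i+k-1\}$ and its complement $B=\{i+k+1,\ldots,i-1\}$ (mod $n$). Because $F_j$ is non-crossing with $F_i$ and disjoint from it, $F_j$ cannot meet both $A$ and $B$: any choice of $a\in F_j\cap A$ and $b\in F_j\cap B$, taken together with $i,i+k\in F_i$, would exhibit the alternation $i<a<i+k<b$ around $U_n$ required to witness $F_i\nparallel F_j$, contradicting Lemma \ref{lem1}. Since $j\in F_j\cap A$ by hypothesis, we conclude $F_j\subseteq A=\{i+1,\ldots,i+k-1\}$, which is the second alternative.

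I do not anticipate a substantive obstacle here; the corollary is essentially a repackaging of Lemma \ref{lem1} together with the purely geometric observation that, on the circle $U_n$, a non-crossing block disjoint from $F_i$ must be confined to one side of any chord determined by two marked elements of $F_i$. The only care needed is in handling the two degenerate values $\{1,\ldots,n\}$ that $F_i$ or $F_j$ might take, and these were already absorbed into the case split above.
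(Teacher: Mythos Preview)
Your proposal is correct and follows essentially the same route as the paper's proof: rule out $F_j\subseteq F_i$ from $j\in F_j\setminus F_i$, invoke Lemma~\ref{lem1} to obtain $F_i\cap F_j=\emptyset$ and $F_i\parallel F_j$ in the remaining case, and conclude $F_j\subseteq\{i+1,\ldots,i+k-1\}$. The paper compresses this last step into a single sentence, whereas you spell out the arc argument and the degenerate cases $F_i,F_j=\{1,\ldots,n\}$ explicitly; these additions are sound but not a different method.
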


\begin{proof} Suppose $i,i+k\in F_i$ and $j\notin F_i$, then for each $i+1\leq j\leq i+k-1$, since $F_j\nsubseteq F_i$, we have $F_i\subseteq F_j$ or ($F_i\cap F_j=\emptyset$ and $F_i\parallel F_j$) by Lemma \ref{lem1}. The latter case implies $F_j\subseteq\{i+1,\cdots,i+k-1\}$ as desired.
\end{proof}

\begin{lem}\label{lem3} Let $F$ be a chain in $\mathcal{NCP}_n$. If $F$ satisfies $(k,+,i)$ ($(k,-,i)$, respectively) for some $1\leq i\leq n$, then for all maximal chain $C$ satisfying $F_j'\cap C_j'=\emptyset$ for all $1\leq j\leq n$, we have $i+1,\cdots,i+k\notin C_i$ ($i-k,\cdots,i-1\notin C_i$, respectively).
\end{lem}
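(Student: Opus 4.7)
My plan is to prove the $(k,+,i)$ case by case analysis on the sub-conditions defining it; the $(k,-,i)$ case then follows by reflecting $U_n$. Within each sub-condition $(k,+,i)(m)$, I will establish $i+1, i+2, \ldots, i+k \notin C_i$ in order of increasing index, using two workhorse tools throughout: (i) the equivalence $F_a \supseteq F_b \iff b \in F_a$ already recorded in the paper, which combined with the hypothesis $F_a' \cap C_a' = \emptyset$ yields $b \notin C_a$ whenever $b \in F_a$ and $b \in \{a-1,a+1\}$; and (ii) Corollary \ref{cor2} applied to $C$, which under $i, i+m \in C_i$ and $j \notin C_i$ for some $i < j < i+m$ supplies the dichotomy $C_i \subseteq C_j$ or $C_j \subseteq \{i+1, \ldots, i+m-1\}$.

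My first step in each sub-condition is to extract every direct non-membership $i+s \notin C_a$ furnished by (i); for $k=1$ this alone proves the lemma. For the inductive position I suppose $i+m \in C_i$ and apply (ii) at $j = i+1$ (and if necessary $j = i+2$). In the first alternative $C_i \subseteq C_j$ the membership propagates into $C_j$, and either immediately contradicts a non-membership from Step 1, or is carried further by a second application of Corollary \ref{cor2} at index $j$. In the second alternative $C_j \subseteq \{i+1, \ldots, i+m-1\}$ the block $C_j$ is confined to a short arc; enumerating its possible values (such as $\{i+1, i+2\}$, $\{i+1, i+3\}$, or $\{i+1, i+2, i+3\}$) and testing each against the direct non-memberships yields a contradiction. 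For example, $C_{i+1} = \{i+1, i+2\}$ forces $i+2 \in C_{i+1}$, which is killed whenever the sub-condition supplies $i+2 \in F_{i+1}$.

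The main obstacle will be the bookkeeping for $k = 4$: in the sub-conditions containing a $\subseteq$ rather than a $\supseteq$, namely $(3,+,i)(2)$, $(4,+,i)(3)$, $(4,+,i)(4)$ and $(4,+,i)(6)$, the direct non-memberships at the $\subseteq$ step are weaker, so the second alternative of Corollary \ref{cor2} must be refuted by pushing the analysis one index further. Concretely, in $(4,+,i)(3)$ there is no direct obstruction to $i+2 \in C_{i+1}$, so the possibilities for $C_{i+1}$ are transferred into $C_{i+2}$ via inclusion, where the non-memberships $i+1, i+3 \notin C_{i+2}$ (coming from $F_{i+1} \subseteq F_{i+2}$ and $F_{i+2} \supseteq F_{i+3}$) finally supply the contradiction. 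Each of the eight sub-conditions of $k=4$ is handled by the same mechanical recipe, but must be checked separately.
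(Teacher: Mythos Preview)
Your proposal is correct and uses the same two core ingredients as the paper: the direct exclusions $b\notin C_a$ whenever $b\in F_a'$, and Corollary~\ref{cor2} applied to the maximal chain $C$. The case analysis you outline will go through.

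The paper organises the argument more economically, and this is worth knowing. Rather than treating each sub-condition $(k,+,i)(m)$ separately and re-deriving $i+1,\dots,i+k-1\notin C_i$ from scratch, the paper first observes that every sub-condition of $(k,+,i)$ implies some sub-condition of $(k-1,+,i)$; thus once the cases $k=1,2,3$ are established, one may assume $i+1,\dots,i+k-1\notin C_i$ and need only rule out $i+k\in C_i$. With this bootstrapping the paper treats all eight sub-conditions of $(4,+,i)$ in one sweep, noting at each branch which sub-conditions are already contradicted (e.g.\ the alternative $C_i\subseteq C_{i+1}$ forces $i\in C_{i+1}'$, killing sub-conditions (1)--(4) at once since they all give $i\in F_{i+1}'$). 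Your per-sub-condition scheme reaches the same contradictions but repeats the work for lower $k$ inside each case and multiplies the number of branches to write out. One further difference: in the sub-case $C_{i+1}=\{i+1,i+2,i+3\}$ the paper invokes maximality of $C$ to extract a proper sub-block $\{i+2,i+3\}$, whereas your method of reading off $C_{i+2}\subseteq C_{i+1}$ and then applying the exclusions at index $i+2$ (and, for sub-condition (4), at index $i+3$) accomplishes the same thing without that step.
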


\begin{proof} Clearly, it suffices to prove for $(k,+,i)$ cases only. Observe that if $F$ satisfies $(k,+,i)$, then $F$ satisfies $(k,+,j)$ for all $1\leq j \leq k$. Choose a maximal chain $C$ satisfying $F_j'\cap C_j'=\emptyset$ for all $1\leq j\leq n$.
If $F$ satisfies $(1,+,i)$, then $i+1\in F_i$ and $F_i'\cap C_i'=\emptyset$, so $i+1\notin C_i$.\\
\\
If $F$ satisfies $(2,+,i)$, then we have $i+1\notin C_i$. Assume that $i+2\in C_i$, then by Corollary \ref{cor2}, $C_{i}\subseteq C_{i+1}$ and hence $i,i+2\in C_{i+1}'$. But $i\in F_{i+1}'$ or $i+2\in F_{i+1}'$, contradiction.\\
\\
If $F$ satisfies $(3,+,i)$, then we have $i+1,i+2\notin C_i$. Assume that $i+3\in C_i$ then By Corollary \ref{cor2}, either $i,i+3\in C_{i+1}$ or $C_{i+1}=\{i+1,i+2\}$ (So condition (1) is discarded). Suppose that $i,i+3\in C_{i+1}$, then $i+2\notin C_{i+1}$ since $F_{i+1}'$ is nonempty. Since $i+1,i+3\in C_{i+1}$ and $i+2\notin C_{i+1}$, by Corollary \ref{cor2} we have $i+1,i+3\in C_{i+2}'$ and a contradiction occurs because $F_{i+2}'$ is nonempty in conditions (2),(3) and (4). Suppose that $C_{i+1}=\{i+1,i+2\}$, then since there is a block $\{i+1,i+2\}$ of an element of $C$, we have $C_{i+2}=\{i+1,i+2\}$ and a contradiction occurs since $i+2\in F_{i+1}'$ or $i+1\in F_{i+2}'$ in all cases.\\
\\
If $F$ satisfies $(4,+,i)$, then we have $i+1,i+2,i+3\notin C_i$. Assume that $i+4\in C_i$, then by Corollary \ref{cor2}, either $C_{i}\subseteq C_{i+1}$ or $C_{i+1}\subseteq \{i+1,i+2,i+3\}$.\\

\textbf{Claim)} $C_{i}\nsubseteq C_{i+1}$\\
If $C_{i}\subseteq C_{i+1}$, then $i,i+4\in C_{i+1}$ and conditions (1),(2),(3) and (4) are discarded. Since $F_{i+1}'\neq\emptyset$ for all cases, we have $i+2\notin C_{i+1}$. Hence, $i+1,i+4\in C_{i+2}$ or $C_{i+2}=\{i+2,i+3\}$. Suppose $i+1,i+4\in C_{i+2}$ (conditions (5) and (6) are discarded and conditions (7) and (8) remain.). Since $F_{i+2}'\neq\emptyset$ in all cases, $i+3\notin C_{i+2}$. Hence, $i+2,i+4\in C_{i+3}'$ by Corollary \ref{cor2} and $F_{i+3}'\neq\emptyset$ for conditions (7) and (8), a contradiction occurs. Suppose $C_{i+2}=\{i+2,i+3\}$, then $C_{i+3}=\{i+2,i+3\}$ and a contradiction occurs because $i+3\in F_{i+2}'$ or $i+2\in F_{i+3}'$ in conditions (5),(6),(7) and (8). So claim is proved.\\

Hence $C_{i+1}\subseteq \{i+1,i+2,i+3\}$. Which means that either $C_{i+1}=\{i+1,i+2\}$ or $C_{i+1}=\{i+1,i+3\}$ or $C_{i+1}=\{i+1,i+2,i+3\}$. Suppose that $C_{i+1}=\{i+1,i+2\}$, then $C_{i+2}=\{i+1,i+2\}$ and a contradiction occurs since $i+2\in F_{i+1}'$ or $i+1\in F_{i+2}'$ in all cases. Suppose that $C_{i+1}=\{i+1,i+3\}$, then by Corollary \ref{cor2}, we have $i+1,i+3\in C_{i+2}'$ and a contradiction occurs since $F_{i+2}'\neq\emptyset$ in all cases. So we have $C_{i+1}=\{i+1,i+2,i+3\}$ and then $\{i+1,i+2,i+3\}$ is a block of some partition $P\in C$. Since $C$ is a maximal chain, there is a partition $P'\in C$ such that $P'<P$ and $P'$ has a block $\sigma$ properly contained in $\{i+1,i+2,i+3\}$. We know that $i+1\notin\sigma$ by definition of $C_{i+1}$. So $\sigma=\{i+2,i+3\}$ and hence $C_{i+2}=\{i+2,i+3\}$ and $C_{i+3}=\{i+2,i+3\}$. A contradiction occurs since  $i+2\in F_{i+1}'$ or $i+3\in F_{i+2}'$ or $i+2\in F_{i+3}'$ in all cases.\\
\end{proof}

\begin{cor}\label{cor4} Let $F$ be a chain in $\mathcal{NCP}_7$. If for some $1\leq i\leq 7$, $F$ satisfies condition $(k,+,i)$ and $(l,-,i)$ and $k+l\geq 6$, then $F$ does not satisfy condition IV.
\end{cor}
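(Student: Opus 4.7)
The plan is to argue by contradiction, applying Lemma \ref{lem3} twice and using a simple pigeonhole on $\{1,\ldots,7\}$. Suppose condition IV holds, so there is a maximal chain $C$ in $\mathcal{NCP}_7$ with $F_j'\cap C_j'=\emptyset$ for every $1\le j\le 7$. I want to show that, under the hypotheses $(k,+,i)$ and $(l,-,i)$ with $k+l\ge 6$, such a $C$ cannot exist.

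First, I would apply Lemma \ref{lem3} to the condition $(k,+,i)$ to conclude $i+1, i+2, \ldots, i+k \notin C_i$, and apply it again to $(l,-,i)$ to conclude $i-1, i-2, \ldots, i-l \notin C_i$. Reading these indices modulo $7$, the two consecutive arcs $\{i+1,\ldots,i+k\}$ and $\{i-l,\ldots,i-1\}$ on the cycle $U_7\setminus\{i\}$ together have total length $k+l\ge 6$. Since $U_7\setminus\{i\}$ has exactly $6$ elements, these two arcs cover every element other than $i$. Hence $C_i \subseteq \{i\}$.

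To finish, I would invoke the definition of $C_i$. Either $C_i$ is a block of some partition in $C$ (and so a subset of size at least $2$ of $\{1,\ldots,7\}$ containing $i$), or by convention $C_i=\{1,\ldots,7\}$ in the event that $\{i\}$ is a singleton in every partition of $C$. Both alternatives contradict $C_i\subseteq\{i\}$, so no maximal chain $C$ can satisfy the disjointness requirement of condition IV, and $F$ does not satisfy condition IV.

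I do not expect any real obstacle; the only point worth a brief word in the write-up is the counting step verifying that two consecutive arcs based at $i$ with total length $\ge 6$ exhaust the remaining six vertices of $U_7$. Everything else is a direct citation of Lemma \ref{lem3} and the definition of the quantity $C_i$.
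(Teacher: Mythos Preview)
Your proposal is correct and matches the paper's own proof essentially step for step: assume condition IV, apply Lemma~\ref{lem3} in both directions to exclude $i+1,\ldots,i+k$ and $i-1,\ldots,i-l$ from $C_i$, note that $k+l\ge 6$ forces $C_i\subseteq\{i\}$, and derive a contradiction since $C_i$ must properly contain $i$. The only difference is that you spell out the covering argument and the two cases in the definition of $C_i$ a bit more explicitly than the paper does.
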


\begin{proof} If $F$ satisfy condition IV, we can choose a maximal chain $C$ such that for all $1\leq i \leq n$, $F_i'\cap C_i'=\emptyset$. By Lemma \ref{lem3}, $i-l,\cdots,i-1\notin C_i$ and $i+1,\cdots,i+k\notin C_i$. Since $k+l\geq6$, we have $C_i\subseteq\{i\}$, which is a contradiction because $C_i$ must contain $i$ properly.
\end{proof}

Now we are ready to give a proof of Theorem \ref{thm5}.

\begin{proof}[Proof of theorem \ref{thm5}] We will enumerate all chains satisfying condition I,II and III up to duality, then show none of them satisfying condition IV.\\
Chains satisfying condition I has rank up to duality :\\
$\{1\},\{2\},\{3\},\{1,2\},\{1,3\},\{1,4\},\{1,5\},\{2,3\},\{1,2,3\},\{1,2,5\}$.\\
Note that condition II,III and IV are invariant under symmetries of $U_7$. So we enumerate chains satisfying condition I,II and III up to symmetries of $U_7$. For convenience, we will use a simplified notation for $F$. We will omit non block elements of partitions and will write block in a simplified form. For example, we write $F=\{\{\{1,2\},\{3\},\{4\},\{5\},\{6\},\{7\}\},\{\{1,2\},\{3\},\{4,5,6\},\{7\}\}\}$ as $F=\{\{12\},\{12,456\}\}$.\\
Also, we will omit cases which have the same set of $F_i'$'s as a case previously appeared because condition IV is a condition for $F_i'$'s only. For example, i omitted $\{\{12\},\{12,34\}\}$ in $\text{rk} F=\{1,2\}$ case because it has the same set of $F_i'$'s as the case $\{\{12,34\}\}$ in $\text{rk} F=\{2\}$ case.\\
Checking condition II is straightforward. To check condition III, we write $F=\{P_1,\cdots,P_k\}$, where $i<j$ iff $\text{rk}P_i<\text{rk}P_j$ and check whether
\begin{enumerate}
\item[i)] $P_k$ has at least 4 distinct members $\sigma_1,\sigma_2,\sigma_3,\sigma_4$ such that $\sigma_1\cup\sigma_3$ and $\sigma_2\cup\sigma_4$ are crossing or
\item[ii)] For some $1\leq i \leq k-1$, $P_i$ has at least 4 distinct members $\sigma_1,\sigma_2,\sigma_3,\sigma_4$ as a subset of a single block $\sigma\in P_{i+1}$ such that $\sigma_1\cup\sigma_3$ and $\sigma_2\cup\sigma_4$ are crossing or
\item[iii)] $P_1$ has a block consists of at least 4 members.
\end{enumerate}

Here is an enumeration of chains satisfying condition I,II and III up to duality and symmetries of $U_7$ (This can be done by inspection).\\
\begin{itemize}
\item[i)]
$\text{rk} F=\{1\}$\\
$F=\{\{13\}\}$, $\{\{14\}\}$
\item[ii)]
$\text{rk} F=\{2\}$\\
$F=\{\{124\}\}$, $\{\{12,34\}\}$, $\{\{12,35\}\}$, $\{\{12,37\}\}$, $\{\{12,45\}\}$, $\{\{12,46\}\}$
\item[iii)] 
$\text{rk} F=\{3\}$\\
$F=\{\{1235\}\}$, $\{\{1245\}\}$, $\{\{1246\}\}$, $\{\{123,45\}\}$, $\{\{123,46\}\}$, $\{\{12,34,56\}\}$
\item[iv)]
$\text{rk} F=\{1,2\}$\\
$F=\{\{13\},\{123\}\}$, $\{\{12\},\{124\}\}$, $\{\{14\},\{124\}\}$, $\{\{24\},\{124\}\}$
\item[v)] 
$\text{rk} F=\{1,3\}$\\
$F=\{\{13\},\{1234\}\}$, $\{\{14\},\{1234\}\}$, $\{\{12\},\{123,45\}\}$, $\{\{13\},\{123,45\}\}$,\\
$\{\{23\},\{123,45\}\}$, $\{\{12\},\{123,56\}\},\{\{13\},\{123,56\}\}$
\item[vi)] 
$\text{rk} F=\{1,4\}$\\
$F=\{\{15\},\{12345\}\}$, $\{\{12\},\{12346\}\}$, $\{\{16\},\{12346\}\}$, $\{\{23\},\{12346\}\}$,\\
$\{\{12\},\{12356\}\}$, $\{\{16\},\{12356\}\}$, $\{\{56\},\{12356\}\}$, $\{\{56\},\{1234,56\}\}$,\\
$\{\{57\},\{1234,57\}\}$, $\{\{67\},\{1235,67\}\}$, $\{\{67\},\{1245,67\}\}$
\item[vii)]
$\text{rk} F=\{1,5\}$\\
$F=\{\{13\},\{123456\}\}$, $\{\{16\},\{123456\}\}$, $\{\{24\},\{123456\}\}$, $\{\{26\},\{123456\}\}$,\\
$\{\{12\},\{12345,67\}\}$, $\{\{15\},\{12345,67\}\}$, $\{\{23\},\{12345,67\}\}$, $\{\{67\},\{12345,67\}\}$,\\
$\{\{56\},\{1234,567\}\}$, $\{\{57\},\{1234,567\}\}$
\item[viii)]
$\text{rk} F=\{2,3\}$\\
$F=\{\{124\},\{1234\}\}$
\item[ix)]
$\text{rk} F=\{1,2,3\}$\\
$F=\{\{13\},\{123\},\{1234\}\}$, $\{\{14\},\{124\},\{1234\}\}$, $\{\{24\},\{124\},\{1234\}\}$
\item[x)] 
$\text{rk} F=\{1,2,5\}$\\
$F=\{\{12\},\{126\},\{123456\}\}$, $\{\{16\},\{126\},\{123456\}\}$, $\{\{12\},\{12,34\},\{123456\}\}$,\\
$\{\{12\},\{12,45\},\{123456\}\}$, $\{\{12\},\{12,56\},\{123456\}\}$, $\{\{16\},\{16,23\},\{123456\}\}$,\\
$\{\{16\},\{16,34\},\{123456\}\}$, $\{\{23\},\{23,45\},\{123456\}\}$
\end{itemize}

Now we will prove that all of these chains does not satisfy condition IV by using Lemma \ref{lem3} and Corollary \ref{cor4}. By contradiction, suppose that for each given $F$, let $C$ be a maximal chain satisfying $F_i'\cap C_i'=\emptyset$ for all $1\leq i \leq n$.
\begin{itemize}
\item[(1)]
$F=\{\{13\}\}$, $\{\{14\}\}$, $\{\{124\}\}$ or $\{\{12,34\}\}$\\
$F$ satisfies $(3,+,6)(1)$ and $(3,-,6)(1)$ (and Corollary \ref{cor4} applies).
\item[(2)]
$F=\{\{12,35\}\}$\\
$F$ satisfies $(3,+,7)(3)$ and $(3,-,7)(1)$.
\item[(3)]
$F=\{\{12,37\}\}$\\
$F$ satisfies $(3,+,5)(1)$ and $(3,-,5)(1)$.
\item[(4)]
$F=\{\{12,45\}\}$\\
$F$ satisfies $(3,+,7)(3)$ and $(3,-,7)(1)$.
\item[(5)]
$F=\{\{12,46\}\}$\\
$F$ satisfies $(1,+,3)(1)$ and $(4,-,3)(6)$, so $C_3=\{3,5\}$ (since by Lemma \ref{lem3}, $(1,+,3)(1)$ implies $4\notin C_3$ and $(4,-,3)(6)$ implies $6,7,1,2\notin C_3$ and $C_3$ properly contains 3) and hence $C_5=\{3,5\}$ (since by definition, if $C_3=\{3,5\}$, then $C_5=\{3,5\}$) but $F$ satisfies $(4,+,7)(6)$ and $(1,-,7)(1)$, so $C_7=\{5,7\}$ and hence $C_5=\{5,7\}=\{3,5\}$, a contradiction.
\item[(6)]
$F=\{\{1235\}\}$ or $\{\{1245\}\}$\\
$F$ satisfies $(3,+,7)(3)$ and $(3,-,7)(1)$
\item[(7)]
$F=\{\{1246\}\}$\\
$F$ satisfies $(1,+,3)(1)$ and $(4,-,3)(6)$, so $C_3=\{3,5\}$ and hence $C_5=\{3,5\}$ but $F$ satisfies $(4,+,7)(6)$ and $(1,-,7)(1)$, so $C_7=\{5,7\}$ and hence $C_5=\{5,7\}=\{3,5\}$, a contradiction.
\item[(8)]
$F=\{\{123,45\}\}$\\
$F$ satisfies $(3,+,7)(3)$ and $(3,-,7)(1)$
\item[(9)]
$F=\{\{123,46\}\}$ or $\{\{12,34,56\}\}$\\
$F$ satisfies $(3,+,7)(3)$ and $(3,-,7)(3)$
\item[(10)]
$F=\{\{13\},\{123\}\}$, $\{\{12\},\{124\}\}$, $\{\{14\},\{124\}\}$, $\{\{24\},\{124\}\}$, $\{\{13\},\{1234\}\}$\\
or $\{\{14\},\{1234\}\}$\\
$F$ satisfies $(3,+,6)(1)$ and $(3,-,6)(1)$.
\item[(11)]
$F=\{\{12\},\{123,45\}\}$, $\{\{13\},\{123,45\}\}$ or $\{\{23\},\{123,45\}\}$\\
$F$ satisfies $(3,+,6)(1)$ and $(3,-,6)(3)$.
\item[(12)]
$F=\{\{12\},\{123,56\}\}$ or $\{\{13\},\{123,56\}\}$\\
$F$ satisfies $(4,+,4)(6)$ and $(1,-,4)(1)$ and so $C_4=\{2,4\}$ and hence $C_2=\{2,4\}$ but $F$ satisfies $(1,+,7)(1)$ and $(4,-,7)(6)$, so $C_7=\{2,7\}$ and hence $C_2=\{2,7\}=\{2,4\}$, a contradiction.
\item[(13)]
$F=\{\{15\},\{12345\}\}$\\
$F$ satisfies $(3,+,3)(1)$ and $(3,-,3)(1)$.
\item[(14)]
$F=\{\{12\},\{12346\}\}$\\
$F$ satisfies $(2,+,3)(1)$ and $(4,-,3)(6)$.
\item[(15)]
$F=\{\{16\},\{12346\}\}$\\
$F$ satisfies $(4,+,2)(6)$ and $(1,-,2)(1)$, so $C_2=\{2,7\}$ and hence $C_7=\{2,7\}$ but $F$ satisfies $(1,+,5)(1)$ and $(4,-,5)(6)$, so $C_5=\{5,7\}$ and hence $C_7=\{5,7\}=\{2,7\}$, a contradiction.
\item[(16)]
$F=\{\{23\},\{12346\}\}$\\
$F$ satisfies $(1,+,5)(1)$ and $(4,-,5)(7)$, so $C_5=\{5,7\}$. $F$ satisfies $(4,-,4)(6)$, so $7,1,2,3\notin C_4$ and since $C_5=\{5,7\}$, we have $5\notin C_4$ (if not, $7\in C_5\subseteq C_4$). Hence $C_4=\{4,6\}$, a contradiction since $C_5$ and $C_4$ are crossing.
\item[(17)]
$F=\{\{12\},\{12356\}\}$\\
$F$ satisfies $(4,+,4)(6)$ and $(1,-,4)(1)$, so $C_4=\{2,4\}$ and hence $C_2=\{2,4\}$ but $F$ satisfies $(1,+,7)(1)$ and $(4,-,7)(6)$, so $C_7=\{2,7\}$ and hence $C_2=\{2,7\}=\{2,4\}$, a contradiction.
\item[(18)]
$F=\{\{16\},\{12356\}\}$\\
$F$ satisfies $(2,+,4)(2)$ and $(4,-,4)(5)$.
\item[(19)]
$F=\{\{56\},\{12356\}\}$\\
$F$ satisfies $(4,+,4)(6)$ and $(1,-,4)(1)$, so $C_4=\{2,4\}$ and hence $C_2=\{2,4\}$ but $F$ satisfies $(1,+,7)(1)$ and $(4,-,7)(6)$, so $C_7=\{2,7\}$ and hence $C_2=\{2,7\}=\{2,4\}$, a contradiction.
\item[(20)]
$F=\{\{56\},\{1234,56\}\}$ or $\{\{57\},\{1234,57\}\}$\\
$F$ satisfies $(4,+,2)(1)$ and $(2,-,2)(1)$.
\item[(21)]
$F=\{\{67\},\{1235,67\}$\\
$F$ satisfies $(4,+,2)(3)$ and $(2,-,2)(1)$.
\item[(22)]
$F=\{\{67\},\{1245,67\}\}$\\
$F$ satisfies $(3,+,3)(3)$ and $(3,-,3)(3)$.
\item[(23)]
$F=\{\{13\},\{123456\}\}$\\
$F$ satisfies $(4,+,5)(3)$ and $(3,-,5)(1)$.
\item[(24)]
$F=\{\{16\},\{123456\}\}$\\
$F$ satisfies $(3,+,3)(1)$ and $(3,-,3)(1)$.
\item[(25)]
$F=\{\{24\},\{123456\}\}$\\
$F$ satisfies $(2,+,7)(2)$ and $(4,-,7)(5)$.
\item[(26)]
$F=\{\{26\},\{123456\}\}$\\
$F$ satisfies $(3,+,4)(1)$ and $(3,-,4)(1)$.
\item[(27)]
$F=\{\{12\},\{12345,67\}\}$\\
$F$ satisfies $(3,+,3)(1)$ and $(3,-,3)(3)$.
\item[(28)]
$F=\{\{15\},\{12345,67\}\}$\\
$F$ satisfies $(3,+,3)(1)$ and $(3,-,3)(1)$.
\item[(29)]
$F=\{\{23\},\{12345,67\}\}$\\
$F$ satisfies $(2,+,4)(1)$ and $(4,-,4)(6)$.
\item[(30)]
$F=\{\{67\},\{12345,67\}\}$\\
$F$ satisfies $(3,+,3)(1)$ and $(3,-,3)(1)$.
\item[(31)]
$F=\{\{56\},\{1234,567\}\}$ or $\{\{57\},\{1234,567\}\}$\\
$F$ satisfies $(4,+,2)(1)$ and $(2,-,2)(1)$.
\item[(32)]
$F=\{\{124\},\{1234\}\}$, $\{\{13\},\{123\},\{1234\}\}$, $\{\{14\},\{124\},\{1234\}\}$ or $\{\{24\},\{124\},\{1234\}\}$\\
$F$ satisfies $(3,+,5)(1)$ and $(3,-,5)(1)$.
\item[(33)]
$F=\{\{12\},\{126\},\{123456\}\}$ or $\{\{16\},\{126\},\{123456\}\}$\\
$F$ satisfies $(3,+,4)(1)$ and $(3,-,4)(1)$.
\item[(34)]
$F=\{\{12\},\{12,34\},\{123456\}\}$\\
$F$ satisfies $(3,+,7)(3)$ and $(3,-,7)(3)$.
\item[(35)]
$F=\{\{12\},\{12,45\},\{123456\}\}$\\
$F$ satisfies $(3,+,3)(3)$ and $(3,-,3)(3)$.
\item[(36)]
$F=\{\{12\},\{12,56\},\{123456\}\}$\\
$F$ satisfies $(3,+,3)(1)$ and $(3,-,3)(3)$.
\item[(37)]
$F=\{\{16\},\{16,23\},\{123456\}\}$\\
$F$ satisfies $(3,+,4)(1)$ and $(3,-,4)(3)$.
\item[(38)]
$F=\{\{16\},\{16,34\},\{123456\}\}$\\
$F$ satisfies $(4,+,2)(6)$ and $(1,-,2)(1)$, so $C_2=\{2,7\}$ and hence $C_7=\{2,7\}$ but $F$ satisfies $(1,+,5)(1)$ and $(4,-,5)(6)$, so $C_5=\{5,7\}$ and hence $C_7=\{5,7\}$, a contradiction.
\item[(39)]
$F=\{\{23\},\{23,45\},\{123456\}\}$\\
$F$ satisfies $(4,+,7)(7)$ and $(4,-,7)(7)$.
\end{itemize}
So we are done.
\end{proof}

\section{Proof that $B_7$ is CAT(0)}

By the construction in \cite{Brad01}, $K_n$ is isometric to $|NCP_n|$. So, to prove $B_n$ is CAT(0), it suffices to prove that $X:=\text{lk}(e_{01},|NCP_n|)$ is CAT(1). In \cite{TDP13}, proof that $X$ is CAT(1) uses an embedding of $X$ to a spherical building. We will describe their proof.\\
Define $L_n$ to be the poset of all linear subspaces of $\{(x_1,\cdots,x_n)\in \mathbb{R}^n | \sum_{i=1}^nx_i=0\}$ ordered by inclusion. Denote $|L_n|$ be the geometric realization of $L_n$ with the orthoscheme metric. For an element $A\in L_n$ define its \textit{rank} rk $A$= dim $A$. There is a rank preserving injective poset map $\phi$ from $NCP_n$ to $L_n$. $\phi$ sends $P\in NCP_n$ to $\phi(P)=\{(x_1,\cdots,x_n)\in\mathbb{F}^n | \sum_{j=1}^{k}x_{i_j}=0\text{ if }\{x_{i_1},\cdots,x_{i_k}\}\in P\}\in L_n$. For convenience, for a poset, we denote 0 be the minimun element and 1 be the maximum element. Recall that $X=\text{lk}(e_{01},|NCP_n|)$ and let $B=\text{lk}(e_{01},|L_n|)$. Since every vertex of $X$ correspond to a vertex in $NCP_n\setminus\{0,1\}$, we may think $X$ as a geometric realization of the poset $\mathcal{NCP}_n=NCP_n\setminus\{0,1\}$. Similarly, we may think $B$ as a geometric realization of $L_n\setminus \{0,1\}$. $\phi|_{\mathcal{NCP}_n}$ induces an injective simplicial map $i:X\rightarrow B$. So we can think $X$ as a subcomplex of $B$. It is proved in \cite{MA99} that $B$ is a CAT(1) space (as a spherical building).\\
Using this embedding of $X$ to the CAT(1) space $B$, if $X$ is locally CAT(1) (see the next paragraph), then by results of Bowditch \cite{Bowd95}, there exists a short (meaning that length is less than $2\pi$) local geodesic in $X$ which is not a local geodesic in $B$. From this observation, we define following notions. For a local geodesic path or loop $l:D\rightarrow X$ ($D=I$ or $S^1$), we say $t\in D$ is a \textit{turning point} if $i\circ l:D\rightarrow B$ is not a local geodesic in $t$. If $l$ is a local geodesic loop in $X$ and $t$ is a turning point, then we say a face $F$ in $X$ is a \textit{turning face} if it is the smallest (with respect to inclusion) intersection of a chamber containing $l([t,t+\epsilon))$ and a chamber containing $l((t-\epsilon,t])$ for some $\epsilon>0$. Here are some definitions used in lemmas about turning faces in \cite{TDP13}. We say that a face is \textit{universal} if all of its vertices have exactly one block that is consecutive. We say a point is \textit{universal} if it is a point in a universal face. We say $x,y\in X$ \textit{fail modularity} if $x\vee y\notin X$ or $x\wedge y\notin X$, where $\vee,\wedge$ is the join and meet in $B$. We will state several results about turning faces proved in \cite{TDP13}.\\
Our next goal is to prove that if $\text{lk}(e_{01},|NCP_m|)$ is CAT(1) for all $3\leq m<n$, then $\text{lk}(e_{01},|NCP_n|)$ is locally CAT(1). By a result in \cite{TJ10}, $\text{lk}(e_{01},|NCP_n|)$ is locally CAT(1) iff for any nonempty cell $\sigma$ of $\text{lk}(e_{01},|NCP_n|)$, we have
\[\text{lk}(\sigma,\text{lk}(e_{01},|NCP_n|))\simeq\text{lk}(\sigma',|NCP_n|)\]
is not CAT(1), where $\sigma'$ is a cell in $|NCP_n|$ containing $e_{01}$. Since $|NCP_n|$ has the orthoscheme metric, by a result of \cite{TJ10} $\text{lk}(\sigma',|NCP_n|)$ is a spherical product of diagonal link of $|NCP_m|$'s for $3\leq m<n$. Since each factor is CAT(1) by assumption, $\text{lk}(\sigma',|NCP_n|)$ is CAT(1) as desired. From now on, for a fixed $n\geq 3$, we assume $\text{lk}(e_{01},|NCP_m|)$ is CAT(1) for all $3\leq m<n$ and hence $\text{lk}(e_{01},|NCP_n|)$ is locally CAT(1).\\

We say a rectifiable loop $l$ in a complete locally compact path-metric space is \textit{shrinkable} if there is a rectifiable loop $l'$ of length shorter than the length of $l$ and there is a homotopy between $l$ and $l'$ going through length non-incresing rectifiable loops. If $l$ is not shrinkable, then we say $l$ is \textit{unshrinkable}. Now we will state lemmas in \cite{TDP13} which give relationship between CAT(1)-ness of $X$ and conditions in section 1.

\begin{lem}[\cite{TDP13},Lemma 3.12]\label{lem7} Let l be a locally geodesic loop in X. Then for every turning point of l in B, its turning face has a corank which contains two consecutive integers.
\end{lem}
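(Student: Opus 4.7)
The plan is a contrapositive analysis of the spherical link of the turning face. Fix the turning point $t$, let $\sigma_F\subset X$ be the open simplex carrying $l(t)$, and let $v^-,v^+\in\text{lk}(\sigma_F,X)$ denote the incoming and outgoing unit tangent vectors of $l$ at $t$. The standard local-geodesic criterion in a piecewise spherical complex says that $l$ is locally geodesic in $X$ at $t$ iff $d_{\text{lk}(\sigma_F,X)}(v^-,v^+)\ge\pi$, while $i\circ l$ failing to be locally geodesic in $B$ at $t$ translates to $d_{\text{lk}(\sigma_F,B)}(i(v^-),i(v^+))<\pi$ under the induced simplicial embedding. The whole task is to rule out this pair of inequalities whenever $\text{cork}\,F$ contains no two consecutive integers.

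The second step is to invoke the standard spherical-join decomposition of links in an orthoscheme complex. If $\text{cork}\,F=I_1\sqcup\cdots\sqcup I_m$ is the partition into maximal consecutive intervals, then
\[ \text{lk}(\sigma_F,X)=J_1^X*\cdots*J_m^X,\qquad \text{lk}(\sigma_F,B)=J_1^B*\cdots*J_m^B, \]
where $J_j^\bullet$ is the link associated with the rank-$(|I_j|+1)$ sub-interval at the $j$-th gap, and the induced embedding of links is the spherical join of factorwise embeddings $J_j^X\hookrightarrow J_j^B$. By the spherical-join distance formula, a strict shortening in the total distance forces a strict shortening in at least one factor.

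Now I would argue the contrapositive: assume every $I_j$ is a singleton $\{i_j\}$. Each factor $J_j^X$ is then the $0$-dimensional link of a codim-$1$ face inside a rank-$2$ sub-interval, namely a finite set of non-crossing partitions of rank $i_j$ sandwiched between the two fixed neighbors of rank $i_j\pm 1$ in $F$; correspondingly $J_j^B$ is the analogous object coming from the projective line of intermediate subspaces in the two-dimensional quotient $\phi(P_{i_j+1})/\phi(P_{i_j-1})$. I would verify that in each such singleton factor the $B$-distance between any two points of $J_j^X$ arising as factor-components of $v^\pm$ equals their $X$-distance, by a direct dihedral-angle computation at the codim-$1$ face in the $n$-orthoscheme. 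Combining factorwise via the join then yields $d_{\text{lk}(\sigma_F,B)}(i(v^-),i(v^+))=d_{\text{lk}(\sigma_F,X)}(v^-,v^+)\ge\pi$, contradicting the turning inequality.

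The main obstacle I anticipate is the dihedral-angle comparison in the third step: one must rule out that a $B$-geodesic between two non-crossing refinements in a singleton factor can strictly shortcut through intermediate non-partition subspaces. This reduces to a spherical-geometry computation in the $n$-orthoscheme showing that the angle subtended between two chambers sharing a codim-$1$ face depends only on the combinatorics of the face and not on whether the ambient realization is $X$ or $B$, so that two $X$-chambers cannot be closer in $B$ than in $X$ at a singleton gap. Once this factorwise equality is secured, the join formula forces the corank to have a non-singleton interval, i.e., two consecutive integers in $\text{cork}\,F$.
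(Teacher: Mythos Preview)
The present paper does not prove this lemma; it is quoted from \cite{TDP13} (their Lemma~3.12) and used as a black box, so there is no in-paper argument to compare against. Your plan---contrapositive via the spherical-join decomposition of the link, reducing to the observation that $0$-dimensional join factors cannot produce a strict shortening under $i$---is the standard approach and is essentially what Haettel--Kielak--Schwer do.

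Two small points worth tightening. First, you take $\sigma_F$ to be the open simplex carrying $l(t)$ and silently identify it with the turning face. By the paper's definition the turning face is $F=C^+\cap C^-$ for the incoming/outgoing chambers; the carrier $G$ of $l(t)$ only satisfies $G\subseteq F$ in general. The directions $v^\pm$ therefore live in $\text{lk}(G,-)$, and one should observe that in each join factor of $\text{lk}(G,-)$ corresponding to a gap of $G$ lying entirely in $\text{rk}\,F$ the projections $v_j^-$ and $v_j^+$ coincide, so any shortening is confined to factors whose gap meets $\text{cork}\,F$; that is what pins the conclusion to $\text{cork}\,F$ rather than $\text{cork}\,G$. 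Second, the ``obstacle'' you anticipate is not one: a singleton gap yields a rank-$2$ interval whose diagonal link is $0$-dimensional in both $X$ and $B$, and two distinct vertices in a $0$-dimensional factor contribute effective distance $\pi$ in the spherical-join formula regardless of the ambient poset, since there are no positive-dimensional cells through which a $B$-geodesic could shortcut. No separate dihedral-angle computation is required.
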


\begin{lem}[\cite{TDP13},Lemma 3.29]\label{lem8} Let $x\in X$ be a universal point, and let l be a short loop in X through x. Then l is shrinkable in X.
\end{lem}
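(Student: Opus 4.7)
The plan is to argue by contradiction. Assume $l$ is unshrinkable. Since $X$ is locally CAT(1) by the inductive hypothesis set up just before Lemma \ref{lem7}, standard CAT(1) theory in the complete locally compact setting implies that any unshrinkable short rectifiable loop is homotopic, through length-non-increasing homotopies, to a local geodesic loop of the same length. Hence I may replace $l$ by a short local geodesic loop in $X$ that still passes arbitrarily close to the universal point $x$; since the universal locus is open in its ambient face, I can arrange that $l$ passes through a universal point, which I again call $x$.

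Next, view $l$ inside the spherical building $B$ via the embedding $i:X\hookrightarrow B$. Since $B$ is CAT(1), no local geodesic loop in $B$ has length strictly less than $2\pi$, so $i\circ l$ is not a local geodesic in $B$. Therefore $l$ has at least one turning point, and by Lemma \ref{lem7} each turning face has corank in $\mathcal{NCP}_n$ containing two consecutive integers. This combinatorial restriction on turning faces, combined with the universality of $x$, is what I will exploit.

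The heart of the argument is to show that a universal point cannot arise as, nor lie arbitrarily near, a turning point. Universality means $x$ sits inside a face $F$ all of whose vertices are non-crossing partitions with exactly one consecutive block; I would show that the corresponding linear subspaces in $L_n$ have enough slack in the directions provided by the consecutive block and the singletons that short geodesics in $B$ leaving $x$ in directions tangent to $X$ remain in $X$. Equivalently, every local geodesic in $X$ through $x$ is also a local geodesic in $B$ in a neighborhood of $x$, so $x$ cannot be a turning point. Combined with the openness of universality, one can then slide each turning point of $l$ through a universal neighborhood, using the corank constraint of Lemma \ref{lem7} to control which faces are encountered, eliminating the turning points one by one until $l$ becomes a short local geodesic in $B$, contradicting the CAT(1) property of $B$.

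The main obstacle is the convexity claim in the third paragraph: making precise the sense in which a universal neighborhood of $F$ in $X$ is convex inside $B$. This requires analyzing the image of $\phi$ near a universal face and checking that the apartment chambers of $B$ meeting a universal face are spanned by vertices of $\mathcal{NCP}_n$. Once this local convexity is established, the sliding of turning points is routine; the rest of the proof is a bookkeeping argument comparing $\mathrm{lk}(x,X)$ and $\mathrm{lk}(x,B)$ in the directions of $l$.
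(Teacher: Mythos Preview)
The paper does not supply its own proof of this lemma: it is simply quoted from \cite{TDP13} as Lemma~3.29, with no argument given here. So there is nothing in the present paper to compare your proposal against; the intended ``proof'' is just the citation.

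That said, your sketch has a genuine gap independent of any comparison. The hypothesis is only that the loop $l$ passes through \emph{one} universal point $x$; there is no reason the turning points of $l$ should occur at, or even near, $x$. Your third and fourth paragraphs conflate ``$x$ is not a turning point'' with ``the turning points of $l$ can be slid through a universal neighborhood.'' Even granting your local convexity claim at $x$, this only shows $x$ itself is not a turning point; it says nothing about the other turning points of $l$, which may lie in non-universal faces far from $x$. The phrase ``slide each turning point of $l$ through a universal neighborhood'' is therefore unsupported: turning points are intrinsic to the loop and the ambient geometry, and you have given no mechanism (no homotopy, no retraction) that would move them into the universal locus or eliminate them. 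The actual argument in \cite{TDP13} does not proceed by analyzing turning points at all; it exploits the fact that a universal face lies in a specific apartment together with an opposite pair of universal vertices, and uses a building-theoretic retraction/gate argument to produce an explicit length-decreasing homotopy of $l$. Your local-convexity-plus-sliding strategy, as written, does not get there.
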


\begin{lem}[\cite{TDP13},Lemma 3.15]\label{lem9} Let $l:I\rightarrow X$ be a locally geodesic segment in X with a turning point t in B. Let $E^+$ (respectively $E^-$) be minimal faces in X containing the image under l of a right (respectively left) $\epsilon$-neighbourhood of t for some $\epsilon>0$. Then there exist vertices $x^+\in E^+$ and $x^-\in E^-$ which fail modularity.
\end{lem}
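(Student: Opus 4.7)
The plan is to prove the contrapositive: assuming every pair $(x^+,x^-)\in E^+\times E^-$ is modular, I would deduce that $i\circ l$ is locally geodesic in $B$ at $t$, contradicting that $t$ is a turning point. First I would set up the local combinatorics. Let $F$ be the unique minimal face of $X$ whose relative interior contains $l(t)$, so $F\subseteq E^+\cap E^-$, and let $C$, $C^+$, $C^-$ be the chains in $\mathcal{NCP}_n$ corresponding to $F$, $E^+$, $E^-$, so that $C\subseteq C^+$ and $C\subseteq C^-$. Because $\phi$ is a rank-preserving injection intertwining partition join/meet with subspace sum/intersection whenever both are defined, modularity of a vertex pair $(x^+,x^-)$ is equivalent to $x^+\parallel x^-$, i.e.\ to $x^+\vee x^-,x^+\wedge x^-\in\mathcal{NCP}_n$.

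Next, I would show that pairwise modularity of the vertices of $C^+\setminus C$ with those of $C^-\setminus C$ produces a single chain $\widetilde C\subseteq\mathcal{NCP}_n$ containing $C^+\cup C^-$, constructed by iteratively inserting a vertex of $C^-\setminus C$ into $C^+$ using the joins and meets supplied by modularity as the missing rungs of the ladder. The geometric realization of $\widetilde C$ is a simplex $E$ of $X$ with $E^+,E^-\subseteq E$. Since $\phi$ preserves rank, the restriction of $i:X\hookrightarrow B$ to $E$ is an isometry onto its image simplex in $B$, both equipped with the orthoscheme metric. The left and right $\epsilon$-neighbourhoods of $t$ under $l$ then lie inside this common simplex $E$, so the local-geodesic condition at $t$ is intrinsic to $E$ and agrees in $X$ and $B$; this forces $i\circ l$ to be locally geodesic at $t$, contradicting the turning assumption.

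The main obstacle is the chain-merging step. Pairwise non-crossing of two chains above a common sub-chain does not in general force their union to be a chain, so producing $\widetilde C$ requires a careful rank-induction that exploits both halves of modularity (join and meet) and the lattice structure of $\mathcal{NCP}_n$, specifically that joins and meets of non-crossing pairs remain in $\mathcal{NCP}_n$ and iterate to fill out a single flag. Once this structural step is carried out, the remainder of the argument is formal and driven entirely by the isometry of orthoscheme simplices under $\phi$.
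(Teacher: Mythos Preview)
The paper does not give its own proof of this lemma; it is quoted verbatim from \cite{TDP13} (as Lemma 3.15 there) and used as a black box, so there is no in-paper argument to compare against. What remains is to assess your proposal on its own.

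Your contrapositive strategy is reasonable, but the chain-merging step is not just an obstacle---it is false as stated. Pairwise modularity of the vertices of $C^+$ with those of $C^-$ does \emph{not} force $C^+\cup C^-$ to be a chain, so no simplex $E$ of $X$ can contain both $E^+$ and $E^-$. A concrete counterexample in $\mathcal{NCP}_6$: take $C=\{c\}$ with $c=\{\{1,2\}\}$, and let $x^+=\{\{1,2,3\}\}$, $x^-=\{\{1,2\},\{3,4\}\}$. Both have rank $2$, both lie above $c$, and they are incomparable. One checks directly that $\phi(x^+)+\phi(x^-)=\phi(\{\{1,2,3,4\}\})$ and $\phi(x^+)\cap\phi(x^-)=\phi(c)$, so the pair is modular; yet $\{c,x^+,x^-\}$ is not a chain and hence cannot sit inside any $\widetilde C$. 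Your proposed rank-induction cannot repair this, because the joins and meets you would insert are new vertices, not the original $x^\pm$, so you never recover a simplex containing $E^+\cup E^-$.

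The underlying issue is that ``locally geodesic in $B$ at $t$'' does not require $E^+$ and $E^-$ to lie in a common simplex; it is a condition in the link $\mathrm{lk}(F,B)$, which is itself a spherical building. The argument in \cite{TDP13} works in that link: one shows that if every pair is modular then an \emph{apartment} of $\mathrm{lk}(F,B)$ containing the incoming and outgoing directions already lies in $\mathrm{lk}(F,X)$, so the two link-distances agree and $t$ cannot be a turning point. Reformulating your approach in terms of apartments of the link rather than a single ambient simplex is the missing idea.
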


We will identify every face of $X$ with the corresponding chain in $\mathcal{NCP}_n$. Lemma \ref{lem7} tells us that if $F$ is a turning face, then $F$ satisfies condition I. Lemma \ref{lem8} tells us that if $l$ is an unshrinkable short loop and $F$ is its turning face, then $F$ is not a universal face and hence $F$ satisfies condition II. Now we will give a relationship between Lemma \ref{lem9} and condition III.\\

\begin{thm}\label{thm10} If $F$ is a turning face of $X$, then $F$ satisfies condition III.
\end{thm}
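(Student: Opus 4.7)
The plan is to apply Lemma~\ref{lem9} to the local geodesic loop $l$ for which $F$ is the turning face, extract the vertices $x^+ \in E^+$ and $x^- \in E^-$ that it provides (vertices failing modularity in $B$), and use them---adjusting within the same gap of $F$ if necessary---to build the crossing pair required by condition III. By the definition of turning face, $F$ is a common sub-chain of $E^+$ and $E^-$, where $E^\pm$ are the minimal faces containing the image under $l$ of the one-sided $\epsilon$-neighbourhoods of $t$. I would first argue that $x^+, x^-$ are incomparable in $\mathcal{NCP}_n$ and that neither belongs to $F$; otherwise they would lie in a common chain (either $E^+$ or $E^-$), hence be comparable, so their join and meet in $B$ would simply be the larger and smaller element, both already in $X$, contradicting modularity failure. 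Incomparable $x^+, x^-$ then necessarily sit in the same gap of $F = \{P_1<\cdots<P_k\}$: otherwise $x^+$ in gap $i$ and $x^-$ in gap $j>i$ would give $x^+ < P_{i+1} \leq P_j < x^-$. Setting $P^+ := x^+,\ P^- := x^-$ immediately realises condition III(i).

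For condition III(ii), that $P^+ \nparallel P^-$, I would translate the modularity failure through the embedding $\phi$. Since the join in $L_n$ is subspace sum and this sum equals $\phi$ applied to the partition-lattice join of $P^+$ and $P^-$, the failure $x^+ \vee x^- \notin X$ means that this partition-lattice join is either a crossing partition or the top element. The crucial combinatorial fact is that non-crossing $P, Q$ with $P \parallel Q$ always have non-crossing partition-lattice join; contrapositively, a crossing join forces $\nparallel$. In ``interior'' gaps the chain constraints $P_i \leq P^+ \wedge P^-$ and $P^+ \vee P^- \leq P_{i+1}$ exclude the degenerate subcases where the join equals the top or the meet equals the bottom, so the modularity failure must come from a crossing partition-lattice join, which yields $\nparallel$ and closes the argument in those gaps.

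The main obstacle will be the extreme gaps (below $P_1$ or above $P_k$) and the meet-failure mode in $B$, where $x^+, x^-$ can fail modularity while remaining $\parallel$: small examples show that two coarsenings of a common base can jointly span the top while being pairwise non-crossing, and that non-modular pairs of non-crossing partitions need not cross. In those cases I would replace $x^\pm$ with a different pair in the same gap that does cross, exploiting the combinatorial structure of the interval of $\mathcal{NCP}_n$ in question---in particular the bipartite-graph characterisation of modular pairs in the partition lattice---to guarantee that a crossing pair always exists whenever a modularity-failing pair does. This construction of a crossing pair from modularity-failure data is the technical core of the argument.
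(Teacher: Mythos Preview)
Your overall strategy matches the paper's: invoke Lemma~\ref{lem9}, argue that $x^+$ and $x^-$ are incomparable and lie in a single gap of $F$, take $P^\pm=x^\pm$ when the join in $B$ fails to lie in $X$, and otherwise replace $x^\pm$ by a crossing pair in the same gap. The join case is exactly the paper's Case~1.

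There is, however, a concrete error in your localisation of the hard case. You claim that in an \emph{interior} gap the constraints $P_i\le x^+\wedge x^-$ and $x^+\vee x^-\le P_{i+1}$ force the modularity failure to come from a crossing partition-lattice join. This is false: the meet in $B$ is the subspace intersection $\phi(x^+)\cap\phi(x^-)$, and this can strictly contain $\phi(x^+\wedge_{\mathcal P_n}x^-)$ even when $x^+\parallel x^-$ and both lie in an interior gap. For instance, with $n=8$, $P_i=\{\{1,2\}\}$, $P_{i+1}=\{\{1,2,3,4,5,6\}\}$, $x^+=\{\{1,2,3\},\{4,5\}\}$, $x^-=\{\{1,2,5\},\{3,4\}\}$, one has $x^+\parallel x^-$, the partition meet is $\{\{1,2\}\}$, but $\phi(x^+)\cap\phi(x^-)$ is two-dimensional and is not the image of any partition; so the meet in $B$ fails to lie in $X$ while the join does lie in $X$. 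Thus meet-failure is not confined to the extreme gaps and must be handled uniformly.

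The second issue is that you defer the actual content---producing a crossing pair from meet-failure data---to an unspecified construction. The paper's Case~2 does this explicitly: from a nonzero $v\in(\phi(x^+)\cap\phi(x^-))\setminus\phi(x^+\wedge_{\mathcal{NCP}_n}x^-)$ one extracts blocks $\sigma_1\in x^+$, $\tau_1\in x^-$ with $\sum_{i\in\sigma_1\cap\tau_1}v_i\neq0$, then iteratively finds $\sigma_2,\tau_2,\ldots$ so that $\sigma_j\cap\tau_j\neq\emptyset$ and $\sigma_{j+1}\cap\tau_j\neq\emptyset$ until a cycle closes (length $\ge2$). From four consecutive nonempty intersections $\sigma_1\cap\tau_1,\ \sigma_2\cap\tau_1,\ \sigma_2\cap\tau_2,\ \sigma_3\cap\tau_2$ one then builds $y^\pm$ in the same interval $[\,x^+\wedge_{\mathcal{NCP}_n}x^-,\ x^+\vee_{\mathcal{NCP}_n}x^-\,]$ (hence in the same gap of $F$) with the crossing blocks $(\sigma_1\cap\tau_1)\cup(\sigma_2\cap\tau_2)$ and $(\sigma_2\cap\tau_1)\cup(\sigma_3\cap\tau_2)$. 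Your allusion to a ``bipartite-graph characterisation of modular pairs'' is the right intuition---the $\sigma_j,\tau_j$ are exactly a cycle in the block-intersection bipartite graph---but you need to carry out this extraction and verify that the resulting $y^\pm$ remain in the gap and actually cross.
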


\begin{proof} We will identify vertices of $X$ with corresponding elements of $\mathcal{NCP}_n$. If $F=\{x_1,\cdots,x_s\}$ is a turning face, then by Lemma \ref{lem9}, there is $x^+\in E^+$ and $x^-\in E^-$ which fail modularity ($E^+, E^-$ are as in the Lemma \ref{lem9}). The fact $E^+$, $E^-$ are faces containing $F$ impiles that $x^+$, $x^-$ are adjacent to every vertex of $F$ in $X$. Since $x^+, x^-$ fail modularity, we cannot have $x^+\leq x^-$ or $x^+\geq x^-$. Therefore, either $x_i<x^+$,$x^-<x_{i+1}$ for some $1\leq i\leq k-1$, or $x^+$,$x^-<x_1$, or $x_s<x^+$,$x^-$.\\
\textbf{Case 1)} $x^+\vee x^-\notin X$.\\
Let $e_i$ be the $i$-th standard generator of $\mathbb{F}^n$. As an element of $L_n\setminus\{0,1\}$, $x^+$ is generated by $\{e_{i_1}-e_{i_{j}}|\{i_1,\cdots,i_{i_k}\}\in P^+,2\leq j\leq i_k\}$ and $x^-$ is generated by $\{e_{i_1}-e_{i_{j}}|\{i_1,\cdots,i_{i_k}\}\in x^-,2\leq j\leq i_k\}$. $x^+\vee x^-$ is generated by both of them and so it is easy to see that $x^+\vee x^-$ is generated by $\{e_{i_1}-e_{i_{j}}|\{i_1,\cdots,i_{i_k}\}\in x^+\vee_{\mathcal P_n}x^-,2\leq j\leq i_k\}$, where $\vee_{\mathcal P_n}$ is the join in $\mathcal P_n$. Hence $x^+\vee_{\mathcal P_n}x^-\notin \mathcal{NCP}_n$, which means that $x^+\nparallel x^-$.\\
\textbf{Case 2)} There is a non-zero vector $v=(v_1,\cdots,v_n)\in (x^+\wedge x^-)\setminus (x^+\wedge_{\mathcal{NCP}_n}x^-)$. $v\in x^+\wedge x^-$ means that $\sum_{i\in\sigma}v_i=0$ if $\sigma\in x^+\cup x^-$. $0\neq v\notin x^+\wedge_{\mathcal{NCP}_n}x^-$ means that $^\exists\sigma_1\in x^+$, $^\exists\tau_1\in x^-$ such that $\sigma_1\cap\tau_1\neq\emptyset$ and $\sum_{i\in\sigma_1\cap\tau_1}v_i\neq0$. Clearly, $\sigma_1\not\subseteq\tau_1$ and $\tau_1\not\subseteq\sigma_1$ by above conditions. There exists $\sigma_2\in x^+$ such that $\sigma_2\neq\sigma_1$ and $\sigma_2\cap\tau_1\neq\emptyset$ because if not, for each $i\in\tau_1\setminus\sigma_1\neq\emptyset$, we have $\{i\}\in x^+$, so $v_i=0$. Then $\sum_{i\in\sigma_1}v_i\neq0$, contradiction. By the same reason, actually we can choose $\sigma_2$ satisfying an additional condition  $\sum_{i\in\sigma_2\cap\tau_1}v_i\neq0$. In the same way, we choose $\tau_2$, $\sigma_3$ and so on until the same element appears. So we get a set of distinct blocks $\sigma_1,\cdots,\sigma_k$ and $\tau_1,\cdots,\tau_k$, $k\geq 2$ satisfying $\sigma_i\cap\tau_i\neq\emptyset$ and $\sigma_{i+1}\cap\tau_i\neq\emptyset$. Now, consider 4 distinct non-empty sets $\sigma_1\cap\tau_1$, $\sigma_2\cap\tau_1$, $\sigma_2\cap\tau_2$, $\sigma_{2+1}\cap\tau_2$ (+ is done modulo $k$). Let $y^+ =(x^+ \setminus \{\sigma_1,\sigma_2\}) \cup\{\tau_k\cap\sigma_1,\tau_2\cap\sigma_{2+1},(\sigma_1\cap\tau_1)\cup(\sigma_2\cap\tau_2)\}$ and $y^- =(x^- \setminus \{\tau_1,\tau_2\}) \cup\{\sigma_1\cap\tau_1,\sigma_2\cap\tau_2,(\tau_1\cap\sigma_2)\cup(\tau_2\cap\sigma_{2+1})\}$. Then $x^+\wedge_{\mathcal{NCP}_n}x^-\leq y^+,y^-\leq x^+\vee_{\mathcal{NCP}_n}x^-$, so $y^+$ and $y^-$ satisfy (i) of condition III as well. Moreover, $(\sigma_1\cap\tau_1)\cup(\sigma_2\cap\tau_2)\in y^+$ and $(\tau_1\cap\sigma_2)\cup(\tau_2\cap\sigma_{2+1})\in y^-$ such that $((\sigma_1\cap\tau_1)\cup(\sigma_2\cap\tau_2))\nparallel((\tau_1\cap\sigma_2)\cup(\tau_2\cap\sigma_{2+1}))$ as desired.
\end{proof}

We say that a forest in $\mathbb{C}$ is a \emph{non crossing forest} if it is an embedded forest in $\mathbb{C}$ with vertices are elements of $U_n$ and edges are straight line segments and $1\leq$(the number of edges)$\leq n-2$ . Each non crossing forest corresponds to a vertex of $X$ via the correspondence that $i,j$ are in the same block iff they are in the same connected components in the non crossing forest.\\
We say that a tree in $\mathbb{C}$ is a \emph{non crossing spanning tree} if it is an embedded tree in $\mathbb{C}$ with vertices are elements of $U_n$ and edges are straight line segments and the number of edges is $n-1$. For each non crossing spanning tree, we associate a subcomplex $A$ of $X$ called \emph{apartment} that is geometrically realized by all vertices correspond to non crossing forests which is a subset of the non crossing spanning tree. It is known that apartments are isometric to $S^{n-3}$.\\

The following is defined in \cite{TDP13}. We say that a vertex $u$ of a face $F$ is \emph{dominant} if for all apartments $A$ containing $u$, $A$ contains $F$. Note that dominant vertex of a face is unique if exists, in fact, if $u_1<u_2$ are dominant vertices of $F$, then if a block $b_2$ of $u_2$ properly contains a block $b_1$ of $u_1$, then we can construct an apartment containing $u_2$ such that the corresponding non crossing spanning tree connects all vertices of $b_2$ but does not connects all vertices of $b_1$ by using vertices of $b_1$ only. Then $A$ does not contain $F$, a contradiction. If every block of $u_1$ is some block of $u_2$, then there must be a block $b_2$ such that each elements are singletons in $u_1$. We can construct an apartment $A$ containing $u_1$ such that the corresponding non crossing spanning tree connects all vertex of $b_1$ to a vertex not in $b_1$. Then $A$ does not contain $F$, a contradiction.\\

Recall that $F_i'=F_i\cap\{i-1,i+1\}$. It is true that if $u$ is the dominant vertex of $F$, then for all $1\leq i \leq n$, we have $u_i'=F_i'$. Indeed, $u_i\supseteq F_i$ by definition, so $u_i'\supseteq F_i'$. Now we will show $u_i'\subseteq F_i'$. If $u_i$ is a block of $u$, then $F_i$ is also a block of a vertex of $F$. If $u_i$ properly contains $F_i$, then we can construct an apartment containing $u_i$ but not $F$ using the same argument in the last paragraph. Hence $u_i=F_i$ and so $u_i'=F_i'$. The remaining case is that $u_i=U_n$. In this case, if $u_i'\neq F_i'$, then say $i+1\notin F_i'$. Then we construct an apartment $A$ containing $u$ corresponding the non crossing spanning tree that containing the edge connecting $i$,$i+1$ and $i$ is not connected with any other vertices. This can be done because $i$ is a singleton in $u$. Then $A$ does not contain $F$ because $F_i$ is a block of a vertex of $F$ and does not containing $i+1$ and all sub non crossing forest of the no crossing spanning tree cannot connect $i$ with other vertices of $F_i$ by a path avoding $i+1$.\\

To prove that turning faces have to satisfy condition IV, we first state a lemma proved in \cite{TDP13}.\\

\begin{lem}[\cite{TDP13},Lemma 3.35]\label{lem10} Let C be a chamber in X and $i\in U_n$, and let i,j be consecutive elements of $U_n$. If $C_i$ contains j, then there exists an apartment in X containing C, v and w, where v is the boundary edge having the single nontrivial block $\{i,j\}$ and w is the universal vertex opposite to v in B having the single nontrivial block $\{1,\cdots,\hat{i},\cdots,n\}$.\\
\end{lem}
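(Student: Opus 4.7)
My plan is to translate the lemma into a statement about non-crossing spanning trees of $U_n$ and then construct one from $C$. Recall that apartments of $X$ correspond to non-crossing spanning trees $T$ of $U_n$, and a partition $u\in\mathcal{NCP}_n$ lies in the apartment of $T$ iff $T$ restricted to every block of $u$ is connected. Thus the apartment contains $v$ iff $\{i,j\}$ is an edge of $T$, and contains $w$ iff $T$ restricted to $U_n\setminus\{i\}$ is a spanning tree of $U_n\setminus\{i\}$, which amounts to $\{i,j\}$ being the unique edge of $T$ incident to $i$. I therefore need to exhibit a non-crossing spanning tree $T$ compatible with $C$ in which $\deg_T(i)=1$ with neighbor $j$.

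Writing $C=\{P_1<\cdots<P_{n-2}\}$, I view $C$ as a sequence of $n-2$ merges, and $T$ will consist of one chord for each merge together with one closing chord joining the two blocks of $P_{n-2}$. If $i$ is a singleton in every $P_k$, then $C_i=U_n$, the blocks of $P_{n-2}$ are $\{i\}$ and $U_n\setminus\{i\}$, all $n-2$ merges happen inside $U_n\setminus\{i\}$, and the standard assignment of chords yields a non-crossing spanning tree on $U_n\setminus\{i\}$ to which I attach $\{i,j\}$ as the closing edge. Otherwise let $k_0$ be the smallest index with $i$ non-singleton in $P_{k_0}$; the block of $P_{k_0}$ containing $i$ is precisely $C_i$, and the merge at step $k_0$ combines $\{i\}$ with $C_i\setminus\{i\}$. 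Since $j\in C_i$ by hypothesis, I assign $\{i,j\}$ to this merge. For every later merge combining $i$'s current block $B$ with another block $B'$, and for the closing edge if it is incident to $i$'s block, I assign a chord between $B\setminus\{i\}$ and $B'$; this is possible because $B\supseteq C_i\ni j\ne i$.

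The hard part will be to check that these chord choices can be arranged to make $T$ non-crossing. Chords of $T$ used inside different blocks of any $P_k$ never cross each other since the blocks are non-crossing arcs of $U_n$; moreover the specific chord $\{i,j\}$ is harmless because $i,j$ are adjacent on $U_n$, so no chord crosses it. The only obstruction is therefore within a single block of $P_{n-2}$, where the chords chosen for its nested merges must form a non-crossing spanning tree of that block. I would prove this by induction on block size: at each merge $B_1,B_2\to B_1\cup B_2$ the two blocks $B_1$ and $B_2$ occupy disjoint arcs of $U_n$ (once their nested sub-blocks are ignored), so a chord from $B_1$ to $B_2$ respecting the already-chosen chords inside $B_1$ and $B_2$ always exists. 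Restricting to chords with endpoint in $B\setminus\{i\}$ at later steps removes at most one candidate per merge and still leaves at least one admissible choice. Once this is verified, the tree $T$ is non-crossing and the apartment it determines contains $C$, $v$ and $w$.
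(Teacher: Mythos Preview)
The paper does not supply a proof of this lemma at all; it is quoted from \cite{TDP13} (their Lemma~3.35) and used as a black box, so there is no in-paper argument to compare against.

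Your translation into non-crossing spanning trees is the right one, and the construction is sound. A slightly cleaner packaging of the same idea: delete $i$ from every partition in $C$; the merge at step $k_0$ (where $\{i\}$ joins $C_i\setminus\{i\}$) becomes redundant, and you obtain a maximal chain in the non-crossing partition lattice on $U_n\setminus\{i\}$. Any non-crossing spanning tree $T'$ of $U_n\setminus\{i\}$ compatible with this shorter chain, together with the boundary edge $\{i,j\}$, gives the desired $T$; compatibility with $C$ follows because every block of every $P_k$ that contains $i$ also contains $j\in C_i$. This avoids having to thread the constraint $\deg_T(i)=1$ through the induction.

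If you keep your merge-by-merge argument, the only point that needs a sentence of justification is the last clause, ``removes at most one candidate per merge and still leaves at least one admissible choice.'' Concretely: when $B\ni i$ merges with $B'$, the convex hull of $B\cup B'$ has exactly two boundary edges joining $B$ to $B'$, and boundary edges are never crossed by chords internal to $B$ or $B'$. At most one of these two edges is incident to $i$, since $j\in B$ is adjacent to $i$ on $U_n$ and hence the gap of $B$ between $i$ and $j$ is empty; thus $B'$ sits in some other gap and one of the two boundary edges has its $B$-endpoint different from $i$. With that remark added, your proof is complete.
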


In fact, this lemma can be strengthened so that $C$ need not be a chamber and can be any faces.\\

\begin{lem}\label{lem11} Let F be a face in X and $i\in U_n$, and let i,j be consecutive elements of $U_n$. If $F_i$ contains j, then there exists an apartment in X containing F, v and w, where v is the boundary edge having the single nontrivial block $\{i,j\}$ and w is the universal vertex opposite to v in B having the single nontrivial block $\{1,\cdots,\hat{i},\cdots,n\}$.
\end{lem}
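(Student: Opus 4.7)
The plan is to construct the apartment directly by building its underlying non-crossing spanning tree, rather than extend $F$ to a chamber $C$ and apply Lemma~\ref{lem10}. The latter approach can genuinely fail: if $\{i\}$ is a singleton in every partition of $F$ (so $F_i=\{1,\ldots,n\}$ in the convention of the paper, trivially containing $j$), then any chamber $C\supseteq F$ still has $\{i\}$ as a singleton in its rank-one partition, forcing $C_i=\{i\}\not\ni j$, so Lemma~\ref{lem10} cannot be applied to $C$. The direct construction below handles this degenerate situation and the generic situation uniformly.

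The key observation is that an apartment of $X$ simultaneously contains $v$ and $w$ exactly when its defining non-crossing spanning tree $T$ has $\{i,j\}$ as an edge and has $i$ as a leaf, that is, exactly when $T=T'\cup\{\{i,j\}\}$ for some non-crossing spanning tree $T'$ on the $n-1$ cyclic points $\{1,\ldots,n\}\setminus\{i\}$; any such $T$ is automatically non-crossing because $\{i,j\}$ is a boundary chord of $U_n$ and so cannot cross any chord with both endpoints in $\{1,\ldots,n\}\setminus\{i\}$. So I would first form the chain $F^{(i)}$ of non-crossing partitions of $\{1,\ldots,n\}\setminus\{i\}$ obtained by deleting $i$ from every $P\in F$ (erasing the singleton $\{i\}$ or shrinking a block $\sigma\ni i$ to $\sigma\setminus\{i\}$), then extend $F^{(i)}$ to a maximal chain $C'$ in $NCP_{n-1}$, and take $T'$ to be the non-crossing spanning tree on $\{1,\ldots,n\}\setminus\{i\}$ whose edges are the $n-2$ edges added by the covering relations of $C'$. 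By construction every partition of $F^{(i)}$ corresponds to a sub-forest of $T'$, so its blocks are precisely the vertex sets of certain subtrees of $T'$.

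It then remains to verify that the apartment $A$ of $X$ corresponding to $T:=T'\cup\{\{i,j\}\}$ contains $F$, $v$, and $w$. The edge $\{i,j\}$ is in $T$ by construction so $v\in A$, and $T\setminus\{i\}=T'$ spans $\{1,\ldots,\hat{i},\ldots,n\}$ so $w\in A$. For a partition $P\in F$ and a block $B$ of $P$: if $i\notin B$ then $B$ is already a block of $P^{(i)}$ and hence the vertex set of a subtree of $T'\subseteq T$; if $i\in B$ with $|B|\ge 2$ then $B\supseteq F_i\ni j$, so $B\setminus\{i\}$ is a block of $P^{(i)}$ which is a subtree of $T'$ containing $j$, and attaching $i$ through the edge $\{i,j\}$ of $T$ realises $B$ as a subtree of $T$. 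The main obstacle is precisely the failure of the chamber-reduction approach in the case $F_i=\{1,\ldots,n\}$: in that case the hypothesis $j\in F_i$ imposes no constraint on the blocks of partitions of $F$, yet it is exactly the ingredient needed to attach $i$ to $T'$ through the single edge $\{i,j\}$ compatibly with $F$. The direct construction exploits this by treating the edge $\{i,j\}$ as an extra edge independent of the tree $T'$ that witnesses $F^{(i)}$.
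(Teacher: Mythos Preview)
Your direct construction of the apartment via the spanning tree $T=T'\cup\{\{i,j\}\}$ is correct and handles all cases uniformly; the verification that each $P\in F$ is realised by a sub-forest of $T$ is sound, using precisely that any block of $P$ containing $i$ must also contain $j$. This is a genuinely different route from the paper, which instead extends $F$ to a chamber $C$ with $j\in C_i$ and then invokes Lemma~\ref{lem10}. Your approach has the advantage of being self-contained and of making the role of the boundary edge $\{i,j\}$ completely transparent; the paper's approach has the advantage of re-using the already-established Lemma~\ref{lem10} and keeping the new work to a minimum.

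However, your stated reason for abandoning the chamber-reduction approach is mistaken. You write that when $\{i\}$ is a singleton in every partition of $F$ one is ``forcing $C_i=\{i\}\not\ni j$''. By the paper's definition $C_i$ is never $\{i\}$: it is either the smallest \emph{block} (of size at least two) of a partition in $C$ containing $i$, or else $\{1,\dots,n\}$. In the degenerate case $F_i=\{1,\dots,n\}$ there are two possibilities. If $\operatorname{rk} x_{k_s}<n-2$, one may pass to rank $k_s+1$ by merging the singleton $\{i\}$ with the part of $x_{k_s}$ containing $j$; any chamber through this extension then has $j\in C_i$ as a genuine block. If $\operatorname{rk} x_{k_s}=n-2$, then $x_{k_s}=w$ already, every chamber $C\supseteq F$ has $i$ a singleton throughout, whence $C_i=\{1,\dots,n\}\ni j$, and Lemma~\ref{lem10} still applies. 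So the paper's reduction does go through in the case you flagged; its treatment of that case is terse, not broken.
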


\begin{proof} Let $F$ be a face in $X$ and $i\in U_n$, and let $i,j$ be consecutive elements of $U_n$. To prove the lemma, we will find a chamber $C$ containing $F$ such that $j\in C_i$. Then by the lemma \ref{lem10}, we get an apartment $A$ containing $C$, $v$ and $w$ and hence $A$ also contains $F$, $v$ and $w$.\\
A chamber $C$ is constructed as follows. Let $F=\{x_{k_1},\cdots,x_{k_s}\}$, where indices are equal to the rank of corresponding vertex. If $F_i=U_n$, then all vertices of $F$ do not have a block containing $i$, hence $x_{k_s}\leq \{\{1,\cdots,\hat{i},\cdots,n\}\}$. If $F_i$ is a block of a vertex $x_{k_t}=\{a_1,\cdots,a_r\}$ with the minimal $k_t$. Say $F_i=a_p$, $1\leq p \leq r$. For cases where $k_t=1$ or $k_{t-1}=k_t-1$, take any chamber $C$ containing $F$, then we have $j\in C_i$. For the remaining cases, let $v_{k_t-1}=\{a_1,\cdots,a_{p-1},a_p\setminus\{i\},a_{p+1},\cdots,a_r\}.$ Then we have $v_{k_{t-1}}<v_{k_t-1}<v_{k_t}$ (or $v_{k_t-1}<v_{k_t}$ for $t=1$ case) by minimality of $t$. Now, any chamber $C$ containing $F$ and the vertex $v_{k_t-1}$ have the property that $j\in C_i$.
\end{proof}

Now, we use this lemma to give a relationship between turning face and condition IV. This is essentially the same routine of [\cite{TDP13},Lemma 3.36] and [\cite{TDP13},Lemma 3.37].\\

\begin{thm}\label{thm11} If $F$ is face of $X$ and $F$ does not satisfy condition VI, then there exist universal opposite vertices $v$, $w$ and two apartments $A$, $A'$ such that
\begin{itemize}
\item[(1)] $F,v,w$ are contained in $A$
\item[(2)] $C,v,w$ are contained in $A'$
\end{itemize}
Consequently, $F$ is not a turning face.
\end{thm}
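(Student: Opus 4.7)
The plan is to read the hypothesis ``does not satisfy condition VI'' as ``does not satisfy condition IV'' (there is no condition VI in the paper, and the $C$ that appears in (2) then matches the maximal chain used in condition IV). Spelled out, the hypothesis says that for every maximal chain $C$ in $\mathcal{NCP}_n$ there is an index $1\le i\le n$ with $F_i'\cap C_i'\neq\emptyset$. The strategy is: extract a single consecutive pair $(i,j)$ that is simultaneously ``seen'' by $F$ and by an arbitrary chamber $C$, use it to build the universal opposite pair $v,w$, and then feed $(F,v,w)$ into Lemma~\ref{lem11} and $(C,v,w)$ into Lemma~\ref{lem10} to obtain the two apartments. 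The consequence that $F$ is not a turning face is then read off from the presence of the universal vertex $v$ in both apartments via Lemma~\ref{lem8}.

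Concretely, first I would fix any chamber $C$ of $X$ and, using the failure of condition IV, pick $i$ and $j\in F_i'\cap C_i'$. By definition $j$ is consecutive to $i$ in $U_n$ and $j\in F_i\cap C_i$. Next I would define $v$ to be the rank-$1$ vertex of $\mathcal{NCP}_n$ with sole nontrivial block $\{i,j\}$ and $w$ the rank-$(n-2)$ vertex with sole nontrivial block $\{1,\ldots,\hat{i},\ldots,n\}$. Both blocks are consecutive modulo $n$, so $v$ and $w$ are universal; under $\phi$ the image $\phi(v)$ is the line spanned by $e_i-e_j$ and $\phi(w)$ is the hyperplane $\{x_i=0\}\cap\{\sum_k x_k=0\}$, and these two subspaces are complementary in $\{\sum_k x_k=0\}$, so $v$ and $w$ are opposite in $B$.

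Having $i,j,v,w$ in hand, clauses (1) and (2) fall out: applying Lemma~\ref{lem11} to $F$ (using $j\in F_i$) produces an apartment $A\supseteq F\cup\{v,w\}$, and applying Lemma~\ref{lem10} to $C$ (using $j\in C_i$) produces an apartment $A'\supseteq C\cup\{v,w\}$. For the ``consequently'' assertion, I would argue by contradiction. Assume $F$ is a turning face of a short unshrinkable locally geodesic loop $l$ at turning point $t$, let $E^\pm$ be as in Lemma~\ref{lem9}, and take $C$ above to be a chamber containing the trace of $l$ on a one-sided $\epsilon$-neighbourhood of $t$. Since $A$ and $A'$ both contain the opposite pair $(v,w)$, they share the $\pi$-geodesic from $v$ to $w$; this lets me replace the turn at $t$ with an arc that first follows $A'$ from $C$ to $v$ and then follows $A$ from $v$ into the chamber of $l$ on the other side, producing a short loop homotopic to $l$ that passes through the universal point $v$. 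By Lemma~\ref{lem8} that loop is shrinkable, contradicting unshrinkability of $l$.

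The main obstacle I expect is the homotopy in the last step: checking that rerouting $l$ through $v$ inside the two shared apartments is actually length-non-increasing and produces a legitimate rectifiable loop through a universal point, rather than just giving a heuristic picture. This is precisely the content of the ``routine'' alluded to in [TDP13, Lemmas~3.36--3.37]; the ingredients one needs are that inside a single apartment the building metric agrees with the spherical metric on $S^{n-3}$, that geodesics between opposite vertices of $B$ are unique of length $\pi$ and stay in every apartment containing both, and that the resulting concatenation is locally geodesic away from $v$ and still has total length below $2\pi$ so that Lemma~\ref{lem8} can be applied.
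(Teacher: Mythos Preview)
Your proposal is correct and follows essentially the same approach as the paper: negate condition IV to get, for an arbitrary chamber $C$, a consecutive pair $j\in F_i'\cap C_i'$, build the universal opposite pair $v,w$ from $\{i,j\}$, apply Lemma~\ref{lem11} (resp.\ Lemma~\ref{lem10}) to obtain $A\supseteq F\cup\{v,w\}$ and $A'\supseteq C\cup\{v,w\}$, and then defer the shrinkability conclusion to the argument of \cite[Lemma~3.37]{TDP13}. Your write-up is in fact a bit more careful than the paper's own proof in making explicit that $C$ is quantified universally (so one may take it to be a chamber on one side of the turning point), and in spelling out why $v$ and $w$ are opposite in $B$.
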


\begin{proof} Recall condition IV is that there is a maximal chain $C$ in $\mathcal{NCP}_n$ such that $F_i'\cap C_i'=\emptyset$ for all $1\leq i \leq n$.\\
If $F$ does not satisfy condition IV, then there exists a chamber $C$ in $X$ such that for some consecutive integers $i,j$, we have $j\in F_i\cap C_i$. Therefore, by lemma \ref{lem11}, applied to $F$ and $C$ independently, there are apartments $A$, containing $F,v,w$, and $A$, containing $C,v,w$. Suppose $F$ is a turning face and let $l$ be a short unshrinkable loop in $X$ passing through $F$, then the proof of [\cite{TDP13},Lemma 3.37] says that $l$ is shrinkable. Therefore $F$ is not a turning face.
\end{proof}

\begin{rem} In \cite{TDP13}, their version of theorem \ref{thm11} is obtained by changing condition IV to the condition, say condition IV$'$, that either $F$ does not have a dominant vertex of if $u$ is the dominant vertex of $F$, then there is a maximal chain $C$ in $\mathcal{NCP}_n$ such that $u_i'\cap C_i'=\emptyset$ for all $1\leq i \leq n$. Note that since $u_i'=F_i'$, condition IV$'$ and condition IV are equivalent if $F$ has the dominant vertex. Hence, condition IV is stronger than condition IV$'$. For $n=7$, there is a face $F$ satisfying condition I,II,III and IV$'$, for example $F=\{\{1,3\},\{1,3,4,5,7\}\}$ satisfies condition I,II,III and does not have dominant vertex so it satisfies condition IV$'$.
\end{rem}

In summary, we have the following theorem.

\begin{thm}\label{thm12} If $\text{lk}(e_{01},|NCP_n|)$ is not CAT(1) and $\text{lk}(e_{01},|NCP_m|)$ is CAT(1) for all $m<n$, then there is a chain $F$ in $\mathcal{NCP}_n$ such that $F$ and $F^*$ satisfies conditions I,II,III and IV.
\end{thm}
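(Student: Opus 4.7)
The plan is to extract the required chain $F$ from a short unshrinkable local geodesic loop in $X$, use the lemmas and theorems already compiled to verify conditions I--IV for $F$, and then pass to $F^*$ via the natural duality involution of $X$.

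First I would use the assumption that $\text{lk}(e_{01},|NCP_m|)$ is CAT(1) for all $m<n$ to conclude, by the argument already sketched in the paper, that $X=\text{lk}(e_{01},|NCP_n|)$ is locally CAT(1). Since by hypothesis $X$ is not CAT(1), Bowditch's criterion produces a short (length $<2\pi$) unshrinkable local geodesic loop $l$ in $X$. Because $B$ is a CAT(1) spherical building, no closed local geodesic in $B$ has length strictly less than $2\pi$; hence $i\circ l$ is not a local geodesic in $B$, so $l$ has at least one turning point $t_0$ with turning face $F$. From this single face, condition I follows from Lemma~\ref{lem7}, condition II from Lemma~\ref{lem8} combined with unshrinkability (otherwise $l$ would pass through a universal point and shrink), condition III from Theorem~\ref{thm10}, and condition IV from the contrapositive of Theorem~\ref{thm11}.

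To obtain the same conclusions for $F^*$, I would exploit the involution $*_X:X\to X$ induced by the rank-reversing poset automorphism $P\mapsto P^*$ on $NCP_n$. This is a simplicial isometry of $|NCP_n|$ that swaps the endpoints of $e_{01}$, so it restricts to an isometric simplicial involution of $X$. Applying $*_X$ to $l$ yields another short unshrinkable local geodesic loop $l^*:=*_X\circ l$, which by the same reasoning possesses a turning point whose turning face satisfies I--IV. A book-keeping argument using $*_X$-equivariance of the chamber structure of $X$ identifies this turning face with $*_X(F)=F^*$: chambers of $X$ containing $l^*([t,t+\epsilon))$ are exactly the $*_X$-images of chambers containing $l([t,t+\epsilon))$, and the minimal intersection defining the turning face transports cleanly under $*_X$. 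Thus $F^*$ is a turning face of $l^*$, and the four conclusions apply.

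The main obstacle is that turning points are defined extrinsically via the embedding $i:X\to B$, so matching the turning points of $l$ with those of $l^*$ (and their turning faces with duals) requires the duality involution of $X$ to interact compatibly with $B$. The cleanest route is to promote $*_X$ to an isometric involution $*_B$ of $B$ satisfying $i\circ *_X=*_B\circ i$, which would make the correspondence between turning points of $l$ and of $l^*$ immediate. If such an extension is not readily available, one instead argues indirectly: $l^*$ is short and unshrinkable, so it must have some turning face $G$ satisfying I--IV by the arguments already used for $F$, and then $G$ is identified with $F^*$ by the combinatorics of chamber intersections under the simplicial symmetry $*_X$. Either way, the crux is to check that non-crossing duality respects the extrinsic notion of turning point on which conditions I--IV ultimately rely.
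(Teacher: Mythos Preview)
Your proposal is correct and follows the same line as the paper's own proof: extract $F$ as the turning face of a short unshrinkable local geodesic $l$, verify I--IV from the cited lemmas and theorems, and then pass to $F^*$ by applying the duality isometry $\phi=*_X$ of $X$ to $l$. You are in fact more explicit than the paper about the one delicate point---the paper simply asserts that $t$ is again a turning point of $\phi\circ l$ with turning face $F^*$, whereas you correctly flag that this requires the duality to be compatible with the embedding $i:X\to B$. Your route (a), promoting $*_X$ to an isometry of $B$ intertwining $i$, is the natural way to close this; note however that your fallback (b) does not stand on its own, since identifying the turning face $G$ of $l^*$ with $F^*$ via chamber combinatorics already presupposes that the relevant turning point of $l^*$ occurs at the parameter $t$, which is exactly what route (a) is needed to establish.
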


\begin{proof} Since $X=\text{lk}(e_{01},|NCP_n|)$ is locally CAT(1) but not CAT(1), there is an unshrinkable short local geodesic loop $l$ in $X=\text{lk}(e_{01},|NCP_n|)$ by \cite{Bowd95}. Since $B$ is CAT(1) and $i\circ l$ is short, $i\circ l$ is not a local geodesic loop in $B$, where $i$ is the embedding from $X$ to $B$. Hence there is a turning point $t$ and let $F$ be its turning face. $F$ has to satisfy condition I,II,III and IV, by Theorem \ref{thm10}, Theorem \ref{thm11} and explanations below Lemma \ref{lem9}. Consider the isometry $\phi:X\rightarrow X$ induced by the order reversing bijection of $\mathcal{NCP}_n$. The unshrinkable short local geodesic loop $\phi\circ i$ has turning point $t$ again . So, its turning face is  $F^*$ and it satisfies condition I,II,III and IV as well.
\end{proof}

\begin{cor}\label{cor12} For $n\leq 7$, the $n$-strand braid group $B_n$ is a CAT(0) group.
\end{cor}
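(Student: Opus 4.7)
The plan is to argue by induction on $n$, using Theorem \ref{thm12} as the inductive engine and Theorem \ref{thm5} to close the inductive step at $n=7$. The base cases $n\le 5$ are handled by Brady--McCammond \cite{TJ10}, and $n=6$ by Haettel--Kielak--Schwer \cite{TDP13}; in either case one knows that $\text{lk}(e_{01},|NCP_m|)$ is CAT(1) for $3\le m\le 6$. So the only genuine work is in the case $n=7$.

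For the inductive step at $n=7$, I would first invoke the inductive hypothesis to assert that $\text{lk}(e_{01},|NCP_m|)$ is CAT(1) for every $3\le m<7$. Suppose for contradiction that $\text{lk}(e_{01},|NCP_7|)$ fails to be CAT(1). Then the hypotheses of Theorem \ref{thm12} are satisfied, and we obtain a chain $F$ in $\mathcal{NCP}_7$ such that both $F$ and its dual $F^*$ satisfy conditions I, II, III and IV. But this is exactly what Theorem \ref{thm5} rules out, so we reach a contradiction. Hence $\text{lk}(e_{01},|NCP_7|)$ is CAT(1).

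To pass from CAT(1)-ness of the diagonal link to CAT(0)-ness of $B_7$, I would recall the reduction outlined in Section 2.4: the Brady complex $K_7$ is isometric to $|NCP_7|$, the group $B_7$ acts properly and cocompactly on $K_7$, and the quotient $K_7/B_7$ splits metrically as a product of a circle with a PE-complex $Y$ whose unique vertex link is isometric to $\text{lk}(e_{01},K_7)=\text{lk}(e_{01},|NCP_7|)$. Since the latter is CAT(1), $Y$ is locally CAT(0), so $K_7/B_7$ is locally CAT(0), and consequently its universal cover is CAT(0); as $B_7$ acts on this cover properly discontinuously and cocompactly, $B_7$ is a CAT(0) group. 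Combined with the already known cases $n\le 6$, this gives the corollary for all $n\le 7$.

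The whole argument is really just a bookkeeping wrap-up — no new obstacle appears here, since the substantive combinatorial work is entirely contained in Theorem \ref{thm5} (the enumeration and the case analysis via Lemma \ref{lem3} and Corollary \ref{cor4}) and the geometric reductions are stated earlier. The only minor care needed is to make explicit the inductive structure, so that the CAT(1) assumption required by Theorem \ref{thm12} is verified before applying it.
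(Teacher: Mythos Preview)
Your proposal is correct and follows essentially the same route as the paper's proof: cite \cite{TDP13} (and \cite{TJ10}) for the CAT(1) property of $\text{lk}(e_{01},|NCP_m|)$ for $m\le 6$, then apply Theorem~\ref{thm12} and Theorem~\ref{thm5} by contradiction to obtain CAT(1) for $n=7$, and finally invoke the reductions of Section~2.4 to conclude that $B_7$ is CAT(0). The only cosmetic difference is that you make the inductive structure and the passage from CAT(1) link to CAT(0) group more explicit than the paper does.
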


\begin{proof} It is proved in \cite{TDP13} that $\text{lk}(e_{01},|NCP_m|)$ is CAT(1) and hence $B_m$ is CAT(0) for all $m\leq6$. If $\text{lk}(e_{01},|NCP_n|)$ is not CAT(1), then there is a chain $F$ in $\mathcal{NCP}_n$ such that $F$ and $F^*$ satisfies conditions I,II,III and IV, by Theorem \ref{thm11}. But there does not exist such chain by Theorem \ref{thm5}, a contradiction occurs. Hence $\text{lk}(e_{01},|NCP_7|)$ is CAT(1) and $B_7$ is a CAT(0) group.
\end{proof}



\end{document}